\numberwithin{equation}{section}
\newtheorem{theorem}{Theorem}[section]
\newtheorem{corollary}[theorem]{Corollary}
\newtheorem{lemma}[theorem]{Lemma}
\newtheorem{remark}[theorem]{Remark}
\newtheorem*{remarks*}{Remarks}
\newtheorem{proposition}[theorem]{Proposition}
\newcommand\NN{\mathbb{N}}
\newcommand\RR{\mathbb{R}}
\newcommand\EE{\mathbb{E}}
\newcommand\PP{\mathbb{P}}
\newcommand\CB{{\mathcal B}}
\newcommand\CL{{\mathcal L}}
\newcommand\CP{{\mathcal P}}
\newcommand\pp{\mathbf{p}}
\newcommand\ii{\mathbf{i}}
\newcommand\xx{\mathbf{x}}
\newcommand\ww{\mathbf{w}}
\newcommand\eps{\varepsilon}
\title[Random subsets of self-similar sets]{The dimension of random subsets of self-similar sets generated by branching random walk}
\author[P. Allaart]{Pieter Allaart}
\address[P. Allaart]{Mathematics Department, University of North Texas, 1155 Union Cir \#311430, Denton, TX 76203-5017, U.S.A.}
\email{allaart@unt.edu}
\author[L. Streck]{Lauritz Streck}
\address[L. Streck]{School of Mathematics,
University of Edinburgh,
James Clerk Maxwell Building,
Peter Guthrie Tait Road,
Edinburgh,
EH9 3FD, UK.}
\email{lstreck@ed.ac.uk}
\begin{document}

\subjclass[2020]{Primary: 28A78, 28A80}

\begin{abstract}
Given a self-similar set $\Lambda$ that is the attractor of an iterated function system (IFS) $\{f_1,\dots,f_N\}$, consider the following method for constructing a random subset of $\Lambda$: Let $\mathbf{p}=(p_1,\dots,p_N)$ be a probability vector, and label all edges of a full $M$-ary tree independently at random with a number from $\{1,2,\dots,N\}$ according to $\mathbf{p}$, where $M\geq 2$ is an arbitrary integer. Then each infinite path in the tree starting from the root receives a random label sequence which is the coding of a point in $\Lambda$. We let $F\subset\Lambda$ denote the set of all points obtained in this way. This construction was introduced by Allaart and Jones [{\em J. Fractal Geom.} 12 (2025), 67--92], who considered the case of a homogeneous IFS on $\mathbb{R}$ satisfying the Open Set Condition (OSC) and proved non-trivial upper and lower bounds for the Hausdorff dimension of $F$. We demonstrate that under the OSC, the Hausdorff (and box-counting) dimension of $F$ is equal to the upper bound of Allaart and Jones, and extend the result to higher dimensions as well as to non-homogeneous self-similar sets.
\end{abstract}

\keywords{Self-similar set, Random fractal, Hausdorff dimension, Branching random walk}

\maketitle

\section{Introduction}

In this paper we compute the Hausdorff dimension of certain random subsets of self-similar sets in $\RR^d$, constructed via a branching random walk, or equivalently, via random labelings of infinite trees. Our main theorem solves a problem introduced by Allaart and Jones \cite{Allaart-Jones} in a considerably more general setting.

A \textit{self-similar set} in $\RR^d$, corresponding to a finite collection of similar contractions $f_1, \dots, f_N \colon \RR^d \to \RR^d$, is the unique non-empty compact set $\Lambda$ such that 
\[
\Lambda=\bigcup_{i=1}^N f_i(\Lambda).
\]
It is said to satisfy the {\em open set condition} (OSC) if there is a non-empty bounded open set $U \subset \RR^d$ such that $f_i(U) \subset U$ for all $i$ and $f_i(U) \cap f_j(U)= \emptyset$ for all $i \neq j$.  While computing the dimensions of self-similar sets is a difficult problem in general, it has long been known that under the open set condition, the Hausdorff and box-counting dimensions of $\Lambda$ are equal to the unique solution $s$ of the equation
\[
\sum_{i=1}^N r_i^s=1,
\]
where $r_i$ is the contraction ratio, or Lipschitz constant, of $f_i$; see \cite{Hutchinson,Moran}.

Throughout this paper,  we assume that we are given a self-similar set $\Lambda$ satisfying the OSC. (For  technical reasons we also assume that the above $U$ is the interior of its closure- this will be explained in Section \ref{sec:result} in more detail).  The set $\Lambda$ can then be written as
\[
\Lambda=\left\{ x=\lim_{n \to \infty} f_{i_1} \circ \dots \circ f_{i_n}(0): \mathbf{i} \in \{1,  \dots,  N\}^{\NN}\right\}.
\]

We now construct a random subset $F$ of $\Lambda$ as follows.  Fix an $M \in \mathbb{N}$ with $M\geq 2$ and consider the infinite $M$-ary tree (the infinite tree in which the root vertex has degree $M$ and all other vertices have degree $M+1$) with edge set $E$.  Fix a probability vector $\pp=(p_1, \dots, p_N) \in (0, 1)^N$ and let $\omega$ be the random vector on $\{1, \dots, N\}^E$ such that each $\omega(e)$ is distributed according to $\pp$ (so $\PP(\omega(e)=j)=p_j$ for all $j$) and $\omega(e), e\in E$ are independent.  We let $\mathcal{P}$ be the set of infinite paths emanating from the root and write $P(k)$ for the $k$-th edge of a path $P \in \mathcal{P}$.  In this paper, we calculate the dimension of the random subset $F$ of $\Lambda$ defined by
\begin{equation} \label{eq:random-subset}
F=F(M, \pp):=\left\{ x=\lim_{n \to \infty} f_{\omega(P(1))} \circ \dots \circ f_{\omega(P(n))}(0) \Big| P \in \mathcal{P} \right\}.
\end{equation}

To help the reader visualize the model, we first consider the explicit example of the Cantor set in $\RR$ (corresponding to $f_1(x)=x/3$ and $f_2(x)=(x+2)/3$) and the binary tree.  In this setting, we are given the Cantor set
\[
C=\left\{x=2\sum_{i=1}^\infty \frac{x_i}{3^i}: x_i\in\{0,1\}\ \forall i\right\},
\]
a number $p\in(0,1)$, and the infinite binary tree emanating from a root vertex in which each edge $e\in E$ has an associated Bernoulli random variable $\omega(e)$ with $\PP(\omega(e)=0)=p,  \PP(\omega(e)=1)=1-p$.  The random set $F$ from \eqref{eq:random-subset} is then given by
\[
F=\left\{x=2\sum_{i=1}^\infty \frac{\omega(P(i))}{3^i} : P \in \mathcal{P} \right\}.
\]
 
Or, to describe the construction of $F$ more dynamically: Imagine a ``branching random walk with exponentially decreasing step sizes", as follows. At time $n=0$, an ancestor particle sits at $x=0$. Just before time $n=1$, this particle splits into two ``children", each of which instantaneously and independently either stays at $x=0$ with probability $p$, or moves to the right by $2/3$ with probability $1-p$. This process continues: At any time $n$, there are $2^n$ particles, each of which splits into two particles just before time $n+1$. The $2^{n+1}$ children each either stay at the same location as their parent particle, or move to the right by $2(1/3)^{n+1}$, independently of each other and of previous generations. Thus, each particle always moves to the left endpoint of a ``basic interval" of the Cantor set. Let $F_n$ denote the set of all points occupied by at least one particle at time $n$. Thus, $F_0=\{0\}$, 
\[
F_1=\begin{cases}
\{0\} & \mbox{with probability $p^2$},\\
\{2/3\} & \mbox{with probability $(1-p)^2$},\\
\{0,2/3\} & \mbox{with probability $2p(1-p)$},
\end{cases}
\]
etc. The sequence of subsets $(F_n)$ then converges in the Hausdorff metric to the set $F$ we are interested in.  

Before stating the most general version of our results (Theorem \ref{thm:main} below), we first present two special cases, both with two maps $f_1$ and $f_2$ and a binary tree (so $N=M=2$). We assume the OSC is satisfied, with the open set $U$ being the interior of its closure. As above, we are given a $p\in(0,1)$ such that $\PP(\omega(e)=0)=p,  \PP(\omega(e)=1)=1-p$ for all $e\in E$. The first theorem gives an explicit formula for the dimension of $F$ in the homogeneous case, when $r_1=r_2$. The second theorem deals with the inhomogeneous case ($r_1\neq r_2$) and illustrates that there are in general three essentially different cases with regard to the probability vector $\pp$. This sets our model apart from other popular random fractal constructions, such as Mandelbrot percolation, where (conditioned on non-extinction) the Hausdorff dimension is given by a single formula.


\begin{theorem} \label{cor:two-maps-homogeneous}
For the homogeneous IFS $\{f_1,f_2\}$ with equal ratios $r_1=r_2=r$ and the binary tree,  
$$\dim_H F=\dim_B F=\frac{\xi\log\xi+(1-\xi)\log(1-\xi)}{\log r}$$
almost surely,  where $\xi=\xi(p)$ is defined by
$$\xi(p):=\frac{\log 2p}{\log p-\log(1-p)}$$
for $p \neq 1/2$, and $\xi(1/2):=1/2$.
\end{theorem}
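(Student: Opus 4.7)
My plan is to work in the coding space and prove matching upper and lower bounds. Let $\pi\colon\{1,2\}^{\NN}\to\Lambda$ be the natural coding map. Under the OSC and the stated hypothesis on $U$, a standard argument shows that the Hausdorff and box-counting dimensions of $F$ equal those of the random coding set $E:=\{\ii\in\{1,2\}^{\NN}:Z_n(\ii|_n)>0\text{ for every }n\}$, measured in the ultrametric $d(\ii,\jj)=r^{n(\ii,\jj)}$, where $n(\ii,\jj)$ is the length of the longest common prefix and $Z_n(\mathbf{a})$ denotes the number of depth-$n$ tree paths realizing the codeword $\mathbf{a}\in\{1,2\}^n$. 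Equivalently, $E$ is the boundary of the random subtree $S\subset\{1,2\}^*$ with level sets $S_n:=\{\mathbf{a}:Z_n(\mathbf{a})>0\}$.

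For the upper bound I would run a first-moment estimate on $\#S_n$. A fixed codeword with $k=\eta n$ twos satisfies $\EE Z_n(\mathbf{a})=2^np^{(1-\eta)n}(1-p)^{\eta n}$, which is at least $1$ precisely when $\eta\le\xi$; indeed $\xi$ is defined by $(1-\xi)\log p+\xi\log(1-p)=-\log 2$. Markov's inequality gives
\[
\EE\#S_n\le\sum_k\binom{n}{k}\min\!\bigl(1,\,2^np^{n-k}(1-p)^k\bigr),
\]
and a Stirling estimate bounds this by $e^{n(H(\xi)+o(1))}$ where $H(\xi)=-\xi\log\xi-(1-\xi)\log(1-\xi)$, the maximum being realised at $\eta=\xi$. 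A Borel--Cantelli argument then yields $\#S_n\le e^{n(H(\xi)+\varepsilon)}$ eventually a.s.\ for every $\varepsilon>0$, hence $\dim_B F\le H(\xi)/|\log r|$ almost surely, with the same bound for $\dim_H F$.

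For the matching lower bound I would build a Frostman-type measure on $F$ via Biggins' additive martingale for the underlying branching random walk. Set $q_1:=1-\xi,\ q_2:=\xi,\ g(i):=q_i/p_i$, and consider
\[
M_n:=2^{-n}\sum_{|v|=n}\prod_{k=1}^{n}g(\omega(e_k(v))).
\]
Since the offspring number is deterministically $2$ and $g$ takes two bounded positive values, the Kesten--Stigum / Biggins hypothesis is trivial, so $M_n\to M_\infty$ in $L^1$ with $M_\infty>0$ a.s. Aggregating the i.i.d.\ subtree limits below each depth-$n$ vertex produces a random measure $\mu$ on $\{1,2\}^{\NN}$ supported on $E$ whose cylinder mass satisfies
\[
\mu(C_\mathbf{a})=2^{-n}\prod_{k=1}^{n}g(a_k)\sum_{v\text{ depth }n\text{ realizing }\mathbf{a}}M_\infty^{(v)}.
\]
A Peyri\`ere / size-biased spine decomposition shows that under $\mu$ the coordinates of a path are asymptotically i.i.d.\ with marginals $(q_1,q_2)$ and that the subtree counts $Z_n(\ii|_n)$ grow only polynomially, so for $\mu$-a.e.\ $\ii$,
\[
\tfrac{1}{n}\log\mu(C_{\ii|_n})\longrightarrow q_1\log(q_1/p)+q_2\log(q_2/(1-p))-\log 2=-H(\xi),
\]
the last equality being an algebraic consequence of the defining relation for $\xi$. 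Since $n$-cylinders have diameter comparable to $r^n$, the mass distribution principle delivers $\dim_H F\ge H(\xi)/|\log r|=(\xi\log\xi+(1-\xi)\log(1-\xi))/\log r$, matching the upper bound.

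The main hurdle is this lower bound: one must combine the $L^1$-convergence of $M_n$ with an ergodic identification of the typical local H\"older exponent of $\mu$. In the present two-symbol, bounded-offspring, bounded-weight setting both ingredients reduce to classical Kesten--Stigum / Biggins / Peyri\`ere theory, but they must be packaged carefully; the arithmetic pay-off is that the Kullback--Leibler term $D((q_1,q_2)\,\|\,(p,1-p))$ combines with $-\log 2$ to give exactly $-H(\xi)$, which is what pins the dimension to the stated value.
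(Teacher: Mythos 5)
Your upper bound is essentially the paper's argument specialized to $N=M=2$ with equal ratios: both you and the paper estimate $\EE(\#\tilde\Gamma_n)$ (your $\EE\#S_n$) by a first-moment bound $\min\{1,\prod(Mp_j)^{k_j}\}$ on the survival probability of a codeword, a Stirling estimate on the multinomial coefficient, and then Markov plus Borel--Cantelli, with the constrained maximum landing at $\eta=\xi$ because $\eta=1/2$ violates $\phi\ge 0$ and $\eta=1-p$ violates $\phi\le 0$ whenever $p\neq 1/2$. The lower bound, however, is where you genuinely diverge. The paper first shows (Proposition \ref{prop:growth-rate}) that $\dim_H F$ dominates the exponential growth rate of $\EE(\#\tilde\Gamma_n)$ by packaging each scale as a Mauldin--Williams random recursive construction, and then bounds $\EE(\#\tilde\Gamma_n)$ from below (Proposition \ref{prop:key-lower-estimate}) by restricting to a set $D_n$ of ``equidistributed'' codewords, cutting each into $\approx\sqrt{n}$ equal-type blocks, and iterating the exact recursion $a_{i_1\dots i_L}=1-(1-p_{i_1}a_{i_2\dots i_L})^M$ block by block to get a uniform subexponential lower bound on $a_\ii$. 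You instead build a Frostman measure from the Biggins additive martingale $M_n$ and read off the local dimension via the Peyri\`ere size-biased spine. Both routes need real work; the paper's is more elementary and self-contained (no martingale convergence or spine theory), while yours outsources the hard part to Biggins/Lyons--Peyri\`ere machinery and is arguably the more conceptual route when one already has that toolkit in hand.

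Two things you should make explicit for the martingale route to close. First, nondegeneracy of $M_\infty$ requires not only the (trivial) moment hypothesis but also Biggins' derivative condition $\EE\!\left[\sum_j A_j\log A_j\right]<0$ with $A_j=\tfrac12 g(\omega_j)$; computing it gives exactly $-H(\xi)$, which is negative iff $\xi\in(0,1)$, and this holds for every $p\in(0,1)$ -- worth a sentence, since this is precisely where the argument would fail in a degenerate regime. Second, the assertion that $Z_n(\ii|_n)$ grows ``only polynomially'' under $\mu$ is stronger than what falls out easily: conditioning on the spine, the expected number of reattached copies at each depth is of constant order \emph{only if} the empirical digit frequency of $\ii|_n$ is exactly $(q_1,q_2)$; the $O(\sqrt{n})$ fluctuations that $\mu$ allows push the first moment of $Z_n$ to $e^{O(\sqrt{n})}$. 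That is still subexponential and suffices for the $\tfrac1n\log$ limit, but it is the honest estimate, and this -- together with a lower bound on the spine subtree weight $M_\infty^{(v)}$ to keep the cylinder mass from collapsing -- is where the careful packaging you flagged actually lives.
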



The setting of Theorem \ref{cor:two-maps-homogeneous} (and arbitrary homogeneous self-similar sets) was recently considered by Allaart and Jones in \cite{Allaart-Jones}, who established the upper bound in Corollary \ref{cor:homogeneous} below and gave a non-trivial lower bound (which is sharp in the case of $p=1/2$). In that paper it is also shown that, when $p=1/2$,  the random subset $F$ has the maximal Hausdorff dimension $s$ but has $s$-dimensional Hausdorff measure zero. This article improves and significantly extends the results of \cite{Allaart-Jones}, on the one hand by calculating the exact value for the dimension and on the other by proving the results in a much more general setting (of inhomogeneous self-similar sets with arbitrarily many maps and a full $M$-ary trees, for arbitrary $M\geq 2$). 

Suppose next that we have two similarities $f_1, f_2: \RR^d\to\RR^d$ but with different contraction ratios $r_1<r_2$, which makes the dimension of $\Lambda$ equal to the unique number $s_0$ such that 
\begin{equation} \label{eq:Moran-2-maps}
r_1^{s_0}+r_2^{s_0}=1.
\end{equation}

\begin{theorem} \label{cor:two-maps-nonhomogeneous}
For the IFS $\{f_1,f_2\}$ with ratios $r_1<r_2$ and the binary tree, let $s_0$ be given by \eqref{eq:Moran-2-maps}, and let $\tilde{s}=\tilde{s}(p)$ satisfy 
\begin{equation} \label{eq:s-tilde-equation}
pr_1^{\tilde{s}}+(1-p)r_2^{\tilde{s}}=\frac12.
\end{equation}
Furthermore, define
\[
\hat{s}=\hat{s}(p):=\frac{\log c_1(p)\log(2(1-p))-\log c_2(p)\log(2p)}{\log r_1\log(2(1-p))-\log r_2\log(2p)},
\]
where
\[
c_1(p):=\frac{\log(2(1-p))}{\log(1-p)-\log p}, \qquad c_2(p):=\frac{\log(2p)}{\log(p)-\log(1-p)}=1-c_1(p).
\]
There are thresholds $p_*$ and $p^*$ satisfying $0<p_*<1/2<p^*<1$ such that, with probability one,
\[
\dim_H F=\dim_B F=\begin{cases}
s_0 & \mbox{if $p_*\leq p\leq 1/2$},\\
\tilde{s}(p) & \mbox{if $1/2\leq p\leq p^*$},\\
\hat{s}(p) & \mbox{if $p\leq p_*$ or $p\geq p^*$}.
\end{cases}
\]
Specifically, $p_*$ is the unique solution in $(0,1/2)$ of the equation
\[
\frac{\log(2p)}{\log(2(1-p))}=-\left(\frac{r_2}{r_1}\right)^{s_0},
\]
and $p^*$ is the first coordinate of the unique pair $(p,s)$ satisfying $1/2<p<1$ and the system of equations
\begin{gather*}
pr_1^s+(1-p)r_2^s=\frac12,\\
pr_1^s\log p+(1-p)r_2^s\log(1-p)=-\frac12\log 2.
\end{gather*}
Thus, $\dim_H F$, as a function of $p$, is strictly increasing on $[0,p_*]$, constant on $[p_*,1/2]$, and strictly decreasing on $[1/2,1]$.
\end{theorem}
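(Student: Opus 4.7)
The plan is to deduce Theorem~\ref{cor:two-maps-nonhomogeneous} from the general Theorem~\ref{thm:main} by specializing to $N=M=2$ and analyzing the resulting one-parameter variational problem. Parametrize paths through the binary tree by the empirical frequency $q \in [0,1]$ of the label $1$. A standard branching-random-walk count shows that the number of distinct level-$n$ label sequences realized with frequency $\approx q$ grows like $\exp(n\phi(q))$, where
\[
\phi(q) = \min\{H(q),\, \log 2 - D(q\|p)\},
\]
with $H$ the binary entropy and $D$ the Kullback--Leibler divergence, while the corresponding level-$n$ cylinder in $\Lambda$ has diameter $\exp(-n\psi(q))$ with $\psi(q) = -q\log r_1 - (1-q)\log r_2$. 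Granting that Theorem~\ref{thm:main} specializes in this setting to the variational formula
\[
\dim_H F = \dim_B F = \max_{q\in[q_-,q_+]} \frac{\phi(q)}{\psi(q)}, \qquad [q_-,q_+] = \{q : D(q\|p) \leq \log 2\},
\]
the proof reduces to an explicit one-variable optimization.

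Since $H(q) + D(q\|p) = -q\log p - (1-q)\log(1-p)$ is linear in $q$, the two branches of $\phi$ meet at the single kink $q^* = c_1(p)$, producing the candidate value $\hat s(p) := H(q^*)/\psi(q^*)$; using $\log(2(1-p)) = c_1(\log(1-p)-\log p)$ and $\log(2p) = -c_2(\log(1-p)-\log p)$, one verifies this equals the formula for $\hat s(p)$ stated in the theorem. Setting $(\phi/\psi)' = 0$ on each smooth piece yields two further candidates. On $\{\phi = H\}$, the critical point is $q_H = r_1^s/(r_1^s+r_2^s) = r_1^s$ with value $s$ (using $r_1^s+r_2^s=1$). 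On $\{\phi = \log 2 - D\}$, the critical point is $q_g = pr_1^{\tilde s}/(pr_1^{\tilde s} + (1-p)r_2^{\tilde s}) = 2pr_1^{\tilde s}$ with value $\tilde s(p)$, and back-substitution into $g/\psi = \tilde s$ produces exactly \eqref{eq:s-tilde-equation}. Each interior critical point is the maximum on its piece because $\psi$ is affine while $H$ and $-D$ are strictly concave; otherwise the piece is maximized at the kink $q^*$.

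It remains to decide, for each $p$, which critical point is \emph{feasible} (lies in its correct piece) and which gives the larger value. Monotonicity of $c_1$ in $p$ gives $q^* > 1/2$ iff $p > 1/2$, so the region $\{\phi = H\}$ equals $[q^*, q_+]$ for $p > 1/2$ and $[q_-, q^*]$ for $p < 1/2$. A short check then shows that $q_H$ lies in the $H$-region precisely when $p_* \leq p \leq 1/2$, and $q_g$ lies in the $g$-region precisely when $1/2 \leq p \leq p^*$, where the thresholds are defined by $c_1(p_*) = r_1^s$ and $2p^* r_1^{\tilde s(p^*)} = c_1(p^*)$. Outside these ranges the global maximum collapses to the kink value $\hat s(p)$. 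Using $r_1^s + r_2^s = 1$, the equation $c_1(p_*) = r_1^s$ rearranges to $\log 2 + r_1^s \log p_* + r_2^s \log(1-p_*) = 0$ and then to $\log(2p_*)/\log(2(1-p_*)) = -(r_2/r_1)^s$, matching the theorem. Existence and uniqueness of $p_*$ and $p^*$, as well as the claimed monotonicity of $\dim_H F$, follow from monotonicity of $c_1$, of $\tilde s$ (via implicit differentiation of \eqref{eq:s-tilde-equation}), and of $\hat s$ in $p$.

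The principal obstacle will be the case analysis in the third paragraph --- in particular, verifying that the asymmetry $r_1 < r_2$ forces the $H$-critical regime to occur only for $p \leq 1/2$ and the $g$-critical regime only for $p \geq 1/2$, and ruling out by monotonicity arguments that the kink value $\hat s(p)$ ever strictly exceeds $s$ on $[p_*,1/2]$ or $\tilde s(p)$ on $[1/2,p^*]$. The rest is routine calculus.
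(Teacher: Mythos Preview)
Your argument is correct in substance but it follows a genuinely different route from the paper's. The paper deduces Theorem~\ref{cor:two-maps-nonhomogeneous} as a direct specialization of Theorem~\ref{thm:main}: the three conditions \eqref{eq:M-large-condition}, \eqref{eq:M-small-condition}, \eqref{eq:main-case} become explicit functions $g(p)=r_1^{s}\log(2p)+r_2^{s}\log(2(1-p))$ and $\tilde g(p)=pr_1^{\tilde s}\log(2p)+(1-p)r_2^{\tilde s}\log(2(1-p))$ of $p$, and the authors then do one-variable calculus on $g$ and $\tilde g$ (sign at $p=1/2$, convexity, limits at the endpoints) to locate the thresholds $p_*,p^*$; in case (iii) they read off $\hat s$ by solving the $2\times 2$ linear system $h(\xx)=1$, $\phi(\xx)=0$ and evaluating $\psi$. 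No optimization over the attractor coding is redone --- that work is already absorbed into Theorem~\ref{thm:main}.

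You instead bypass Theorem~\ref{thm:main} and re-solve the $N=2$ instance of the variational principle (Theorem~\ref{thm:variational}) from scratch, via the substitution $q=x_1/(x_1+x_2)$ that turns it into $\max_q \phi(q)/\psi(q)$ with $\phi(q)=\min\{H(q),\log 2-D(q\|p)\}$. Your kink/critical-point analysis in $q$-space is the two-map analogue of the Lagrange-multiplier computation in Section~\ref{sec:main-proof}, and your feasibility conditions $c_1(p)\gtrless r_1^{s}$ and $2pr_1^{\tilde s}\gtrless c_1(p)$ are exactly the sign conditions on $g$ and $\tilde g$ in disguise (e.g.\ $c_1(p)=r_1^{s}$ rewrites to $g(p)=0$). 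What you gain is a transparent information-theoretic picture of the three regimes; what the paper gains is brevity, since the optimization has already been done in full generality. Two small points: the result you are ``granting'' is Theorem~\ref{thm:variational}, not Theorem~\ref{thm:main}; and your appeal to ``monotonicity of $c_1$'' needs a one-line justification (it is equivalent to monotonicity of the linear form $q\mapsto q\log(2p)+(1-q)\log(2(1-p))$ in $p$ at its zero, or can be read off from the paper's analysis of $g$).
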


The graph of $\dim_H F$ as a function of $p$ (in the setting of Theorem \ref{cor:two-maps-nonhomogeneous}) is shown in Figure \ref{fig:non-homogeneous} for $r_1=0.2, r_2=0.7$. The phase transitions at $p_*$ and $1/2$ are clearly visible, but the one at $p^*$ is more difficult to see. It can be verified using calculus that $\hat{s}'(p_*)=0$ and $\tilde{s}'(1/2)<0$, so the phase transition at $p_*$ is differentiable but the one at $1/2$ is not. In fact, with additional calculations it can be shown that $\hat{s}''(p_*)<0$, so that the phase transition at $p_*$ is not twice differentiable. Furthermore, it can be shown (see Lemma \ref{lem:convex}) that the function $\tilde{s}(p)$ is strictly convex on $1/2\leq p<1$. It also appears from the graph that $\hat{s}(p)$ is strictly concave on $[0,1]$. Assuming this, it follows that the phase transition at $p^*$ also is once, but not twice differentiable.

\begin{figure}[h] 
\begin{center}
\begin{tikzpicture}[xscale=10,yscale=8]
\draw [->] (0,0) node[anchor=north] {$0$}  -- (1.05,0) node[anchor=west] {$p$};
\draw [->] (0,0) node[anchor=east] {$0$} -- (0,0.9) node[anchor=south] {$\dim_H F$};
\draw(1,0) node[anchor=north]{$1$};
\draw(0.5,0) node[anchor=north]{$1/2$};
\draw[dashed,blue](0.5,0)--(0.5,0.8398);
\draw(0.09,-0.008) node[anchor=north]{$p_*$};
\draw[dashed,blue](0.09,0)--(0.09,0.8398);
\draw(0.79,0) node[anchor=north]{$p^*$};
\draw[dashed,blue](0.79,0)--(0.79,0.5474);
\draw[thick] (0,0)--(0.0001,0.5921)--(0.001,0.6751)--(0.005,0.7437)--(0.01,0.7743)--(0.02,0.8032)--(0.03,0.8181)--(0.04,0.827)--(0.05,0.8326)--(0.06,0.8362)--(0.07,0.8383)--(0.08,0.8395)--(0.09,0.8398)--(0.5,0.8398)--(0.52,0.8121)--(0.54,0.7859)--(0.56,0.7609)--(0.58,0.7371)--(0.6,0.7145)--(0.62,0.693)--(0.64,0.6726)--(0.66,0.6532)--(0.68,0.6347)--(0.7,0.6171)--(0.72,0.6004)--(0.74,0.5844)--(0.77,0.5618)--(0.79,0.5476)--(0.8,0.5406)--(0.82,0.5259)--(0.84,0.5104)--(0.86,0.4937)--(0.88,0.4756)--(0.9,0.4558)--(0.92,0.4334)--(0.94,0.4074)--(0.95,0.3924)--(0.96,0.3754)--(0.97,0.3554)--(0.98,0.3306)--(0.99,0.2954)--(0.995,0.2671)--(0.999,0.2194)--(1,0);
\draw[dashed,thick] (0.09,0.8398)--(0.14,0.8343)--(0.19,0.8224)--(0.24,0.8072)--(0.29,0.7901)--(0.34,0.7716)--(0.39,0.752)--(0.44,0.7313)--(0.49,0.7096)--(0.54,0.6868)--(0.59,0.6627)--(0.64,0.6372)--(0.69,0.6099)--(0.74,0.5803)--(0.79,0.5476);
\draw(0.65,0.75) node {$\tilde{s}(p)$};
\draw(0.94,0.5) node {$\hat{s}(p)$};
\draw(0.047,0.75) node {$\hat{s}(p)$};
\draw(0.29,0.88) node {$s_0$};
\end{tikzpicture}
\end{center}
\caption{The graph of $\dim_H F$ as a function of $p$ in Theorem \ref{cor:two-maps-nonhomogeneous}, for $r_1=0.2$ and $r_2=0.7$. Here $s_0\approx 0.8398$, $p_*\approx 0.0900$ and $p^*\approx 0.7903$. The dashed curve between $p_*$ and $p^*$ is the continuation of the graph of $\hat{s}(p)$ where it lies strictly below $\dim_H F$.}
\label{fig:non-homogeneous}
\end{figure}


\begin{remark}
{\rm
That the Hausdorff and box dimensions of $F$ coincide follows since $F$ is ``statistically self-similar" in the sense of Graf \cite{Graf} (see \cite[Section 2.2]{Allaart-Jones}), but it is also a consequence of our proof, since we will estimate the Hausdorff dimension from below, and the box dimension from above.
}
\end{remark}

\subsection{Connections with the literature}

There is a vast literature on random fractals, and we limit ourselves here to the works most closely related to our model. 
Statistically self-similar sets were introduced independently in the 1980s by Falconer \cite{Falconer}, Graf \cite{Graf}, and Mauldin and Williams \cite{Mauldin-Williams}. These models belong to a class of {\em infinite-variable} random fractals, where at each stage of the construction one chooses for each cylinder set at that stage an IFS at random from a given collection according to a given probability distribution, independently of other cylinders at that stage and of previous stages. This class also includes the model of {\em fractal percolation}, where one divides for instance a square into $M^2$ subsquares (for some fixed $M\geq 2$) and retains each subsquare with probability $p$, or deletes it with probability $1-p$, independently of other subsquares, and keeps repeating this procedure ad infinitum. 

By contrast, in {\em one-variable} random fractal models one chooses at each stage of the construction a single IFS at random and applies it to all cylinders at that stage. Hence, one-variable models are closer in nature to deterministic self-similar sets. To fill the void between the one-variable and the infinite-variable case, Barnsley et al.~\cite{Barnsley} introduced the {\em $V$-variable} random fractals, which were further generalized by J\"arvenp\"a\"a et al.~\cite{Jarvenpaa}. Generalizing in a different direction, Troscheit \cite{Troscheit1} computes various fractal dimensions of both one-variable and infinite-variable random graph-directed self-similar sets.

In several of the above models, there is a positive probability that the limit set is empty, and one has to compute the dimension conditioned on non-extinction. The model we consider here precludes this possibility. 

All of the above-mentioned papers assume some type of separation condition when computing the Hausdorff dimension of the limit set. Much less is known in the case of random self-similar sets with overlaps, although Koivusalo \cite{Koivusalo} manages to compute the Hausdorff dimension in a certain ``uniform" model of random self-similar fractals without assuming any separation condition. In the present article, although we assume the open set condition for our initial IFS, the random subset $F$ can be viewed as a statistically self-similar set with ``extreme overlaps" - see \cite{Allaart-Jones} for an explanation. (In essence, we choose randomly from a collection of IFSs which may have one or more maps repeated.) As such, we believe that this article is a useful step towards a better understanding of random fractals with overlaps.

Aside from \cite{Allaart-Jones}, the only other paper we have found that considers a setup similar to ours is the work by Benjamini et al.~\cite{BGS}. Motivated by the study of Bernoulli convolutions, they consider an overlapping IFS of two maps and a randomly labeled binary tree. Rather than the random attractor $F$, their interest is mainly in the resulting random self-similar measure $\mu$ supported on $F$; that is, the weak limit as $n\to\infty$ of the distribution of the particles at stage $n$ of the branching random walk.

\section{Definitions and statements of main results} \label{sec:result}

We repeat the setup from the Introduction in a more formal manner. Consider an iterated function system (IFS) $\mathcal{F}=\{f_1,\dots,f_N\}$ of similar contractions on $\RR^d$ ($d\in\NN$), where $N\geq 2$. We assume that there is a non-empty bounded open set $U\subset\RR^d$ such that
\begin{enumerate}[(O1)]
\item $f_i(U)\subset U$ for $i=1,\dots,N$;
\item $f_i(U)\cap f_j(U)=\emptyset$ for all $i\neq j$; and
\item $U$ is the interior of its closure.
\end{enumerate}
Conditions (O1) and (O2) are the usual open set condition. The extra condition (O3), which is satisfied by many well-known examples of self-similar sets, will be needed to prove the lower bound for the Hausdorff dimension of our random subset. At the moment we do not know how to eliminate this condition.

Let $\Lambda$ be the attractor of the IFS; that is, $\Lambda$ is the unique non-empty compact set such that
\[
\Lambda=\bigcup_{i=1}^N f_i(\Lambda).
\]
Then the Hausdorff dimension of $\Lambda$ is the unique real number $s_0$ satisfying
\begin{equation} \label{eq:Moran}
\sum_{i=1}^N r_i^{s_0}=1,
\end{equation}
where $r_i$ is the contraction ratio of $f_i$. 

We construct a random subset of $\Lambda$ in the following way. Let $M\geq 2$, and consider a full infinite $M$-ary tree $\mathcal{T}_M$; that is, a tree in which one vertex, the {\em root}, has degree $M$ and all other vertices have degree $M+1$. Let $E$ be the set of edges of $\mathcal{T}_M$, and put $\Omega:=\{1,2,\dots,N\}^E$. We equip $\{1,2,\dots,N\}$ with the discrete topology and $\Omega$ with the corresponding product topology. This induces a Borel $\sigma$-algebra $\mathcal{B}$ on $\Omega$. We now let $\PP$ be the Bernoulli measure on $(\Omega,\mathcal{B})$ associated with the probability vector $\pp$. That is, under $\PP$, the random variables $\omega(e), e\in E$ are independent with $\PP(\omega(e)=i)=p_i$, $i=1,2,\dots,N$. We will let $\EE$ denote the corresponding expectation operator.

Let $\mathcal{P}$ denote the set of all infinite paths in $\mathcal{T}_M$ that start at the root. For a path $P\in\CP$ and $k\in\NN$, we denote by $P(k)$ the $k$th edge of $P$.


For $\omega\in\Omega$, define the set
\[
F(\omega):=\Bigg\lbrace x=\lim_{n\to\infty} f_{\omega(P(1))} \circ \dots \circ f_{\omega(P(n))}(0) \Big| P \in \mathcal{P}\Bigg\rbrace.
\]
Clearly, $F(\omega)\subset \Lambda$, giving the trivial upper bound $\dim_H F\leq s_0$. Our approach in this article is to express the almost-sure dimension of $F$ as the solution of a constrained maximization problem, which we then solve using elementary calculus.


In this paper, ``$\log$" denotes the natural logarithm, and we make the usual convention that $0\log 0=0$. We define the following functions from $[0,\infty)^N$ to $\RR$:
\begin{gather}
\psi(x_1,\dots,x_N):=\sum_{i=1}^N x_i\left(\log\sum_{j=1}^N x_j-\log x_i\right), \label{eq:psi}\\
\phi(x_1,\dots,x_N):=\sum_{i=1}^N x_i\log(Mp_i), \label{eq:phi}\\
h(x_1,\dots,x_N):=-\sum_{i=1}^N x_i\log r_i. \label{eq:h}
\end{gather}
Write $\xx=(x_1,\dots,x_N)$ for short.

\begin{theorem} \label{thm:variational}
We have, $\PP$-almost surely,
\begin{equation} \label{eq:variational-principle}
\dim_H F=\dim_B F=\max\big\{\psi(\xx)+\min\{0,\phi(\xx)\}: \xx\in[0,\infty)^N,\ h(\xx)=1\big\}.
\end{equation}
\end{theorem}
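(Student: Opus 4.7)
The plan is to establish the upper bound on $\overline{\dim}_B F$ and the lower bound on $\dim_H F$ separately; since $\dim_H F\leq\overline{\dim}_B F$ is automatic, both then squeeze to a common value. The organizing principle is to decompose tree labels by their empirical frequencies. For each $n\in\NN$ and each $\xx\in[0,\infty)^N$ with $h(\xx)=1$ and $nx_i/S(\xx)\in\NN_0$ (where $S(\xx)=\sum_j x_j$), let $A_n(\xx)\subset\{1,\dots,N\}^n$ denote the set of words with exactly $nx_i/S(\xx)$ occurrences of the letter $i$, and let $T_n(\xx)(\omega)\subseteq A_n(\xx)$ be the subset of words realized along some depth-$n$ branch of the random labeled tree. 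A Stirling estimate gives $|A_n(\xx)|=\exp(n\psi(\xx)/S(\xx)+O(\log n))$, and every cylinder $f_{\sigma_1}\circ\cdots\circ f_{\sigma_n}(\Lambda)$ with $\sigma\in A_n(\xx)$ has diameter equal to $\diam(\Lambda)\exp(-n/S(\xx))$.

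For the upper bound, I would use the first moment bound
\[
\EE|T_n(\xx)|\leq|A_n(\xx)|\cdot\min\bigl\{1,M^n\prod_i p_i^{nx_i/S(\xx)}\bigr\}\leq\exp\bigl(n(\psi(\xx)+\min\{0,\phi(\xx)\})/S(\xx)+O(\log n)\bigr),
\]
combined with Markov's inequality and Borel--Cantelli along a polynomial grid of vectors $\xx$ with $h(\xx)=1$, to produce an almost-sure bound of the same form for $|T_n(\xx)|$ at all grid points and all large $n$. To pass to the box dimension I would cover $F$ by the Markov stopping-time cylinders $\{f_\sigma(\Lambda):r_\sigma\leq\delta<r_{\sigma^{-}}\}$ at scale $\delta$, partition by frequency class and sum; the polynomially many frequency classes together with the continuity of $\xx\mapsto\psi(\xx)+\min\{0,\phi(\xx)\}$ on $\{h=1\}$ yield $\overline{\dim}_B F\leq d^*$ almost surely, where $d^*$ denotes the right-hand side of the variational formula.

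For the lower bound, fix a maximizer $\xx^*$, write $S^*=S(\xx^*)$ and $d^*=\psi(\xx^*)+\min\{0,\phi(\xx^*)\}$. The plan is to build a random probability measure $\mu$ supported on $F$ with $\mu(B_r)\lesssim r^{d^*-\eta}$ for arbitrarily small $\eta>0$ and then invoke the mass distribution principle. A Biggins-type strong law for the branching random walk on frequency space carried by the tree should give $|T_n(\xx^*)|\geq\exp(nd^*/S^*-o(n))$ almost surely: in the regime $\phi(\xx^*)<0$ this reduces to a standard LDP/second-moment computation for the number of tree branches realizing empirical frequency near $\xx^*/S^*$; in the regime $\phi(\xx^*)\geq 0$ it becomes a coupon-collector-type statement that all but a subexponential fraction of $A_n(\xx^*)$ is touched by the tree. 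I would then distribute unit mass uniformly over $T_n(\xx^*)$, identified with its geometric cylinders, and extract a weak subsequential limit $\mu$. Condition (O3), which forces a uniform lower bound on the gaps between the closed cylinders $f_\sigma(\overline U)$ at depth $n$ comparable to their common diameter $e^{-n/S^*}$, is what allows a cylinder-based Frostman inequality to be transferred to balls in $\RR^d$.

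The main obstacle is the Frostman control on $\mu$. The cleanest heuristic -- namely that a uniform measure on $e^{nd^*/S^*}$ cylinders of diameter $e^{-n/S^*}$ has natural Frostman exponent $d^*$ -- must be upgraded to a genuine second-moment estimate valid uniformly across dyadic scales and across the random realization of the tree. The saturated case $\phi(\xx^*)\geq 0$ demands ruling out logarithmic-sized clumps of tree paths piling up on the same cylinder, while the unsaturated case $\phi(\xx^*)<0$ demands a sharp large-deviation estimate showing that the branches with near-optimal frequency genuinely spread themselves among the available cylinders at depth $n$. The quantitative exploitation of (O3) -- i.e., the fact that $U$ equals the interior of its closure -- is also expected to be delicate, as it is the geometric ingredient bridging the combinatorial tree counts and the Euclidean fractal dimension.
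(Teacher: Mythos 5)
Your upper bound matches the paper's in essence: first moment bound on the number of touched cylinders at a stopping scale, decomposition by frequency class, Stirling, and Markov $+$ Borel--Cantelli. The paper organizes this around a single stopping set $\tilde\Gamma_n$ (rather than a grid of $\xx$'s) and then shows $\limsup n^{-1}\log\EE(\#\tilde\Gamma_n)\leq s^*$, but the mechanism is the same and your sketch would carry through with routine bookkeeping.

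Your lower bound takes a genuinely different route from the paper: you aim to build a Frostman measure and invoke the mass distribution principle, whereas the paper constructs, for each $n$, a random subset $F_n\subset F$ that is a Mauldin--Williams random recursive construction, applies their theorem to get $\dim_H F_n\sim n^{-1}\log\EE(\#\tilde\Gamma_n)$, and then reduces the problem to a lower bound on $\EE(\#\tilde\Gamma_n)=\sum_{\ii\in\Gamma_n}a_\ii$. This reduction to an \emph{expectation} is the whole point: it replaces a hard almost-sure Frostman estimate (which you correctly flag as the main obstacle but do not supply) by a deterministic combinatorial estimate. The remaining work in the paper is the key technical lemma you are missing: the probabilities $a_\ii$ are \emph{not} uniformly bounded below on a frequency class --- for highly ordered words such as $\ii=1^{k_1}2^{k_2}\cdots$, $a_\ii$ can be exponentially small even in the regime $\phi(\xx^*)\geq 0$, since $Mp_1<1$ is possible. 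The paper handles this by restricting to a subset $D_n$ of ``random-looking'' words (equidistributed in blocks of length $\approx\sqrt{L}$), shows $\#D_n$ still has the right exponential size, and proves a lower bound on $a_\ii$ for $\ii\in D_n$ via the recursion $a_{i_1\cdots i_L}=1-(1-p_{i_1}a_{i_2\cdots i_L})^M$. Your coupon-collector heuristic (``all but a subexponential fraction of $A_n(\xx^*)$ is touched'') is false as stated precisely because of these ordered words; the correct statement is that a subexponentially-small fraction of $A_n(\xx^*)$ suffices, provided it is chosen to be well distributed. Without some analogue of this restriction, both your second-moment plan in the unsaturated regime and your coverage plan in the saturated regime will fail.

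Separately, your reading of condition (O3) is incorrect. The hypothesis that $U$ is the interior of its closure does \emph{not} yield a gap between sibling cylinders $f_\sigma(\overline U)$ comparable to their diameter --- under the open set condition cylinders may touch along their boundaries (e.g.\ the four quarters of $[0,1]^2$). In the paper, (O3) is used only to verify the hypothesis of the Mauldin--Williams theorem that the seed compactum $K=\overline U$ is the closure of its interior; it plays no role in separating cylinders, and a cylinder-to-ball Frostman transfer cannot rely on it the way you propose.
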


Proving this result will be our main task in the remainder of the paper.  In Section \ref{sec:sketch}, we sketch its proof and explain where the different terms in the optimization problem come from. The constrained maximum cannot exceed the dimension of $\Lambda$, because
\[
\dim_H \Lambda=\dim_B \Lambda=\max\big\{\psi(\xx)
: \xx\in[0,\infty)^N,\ h(\xx)=1\big\}.
\]
This last equation may be verified readily using Lagrange multipliers and the Moran equation \eqref{eq:Moran}. An immediate consequence is that, if $Mp_i\geq 1$ for each $i$, then $\dim_H F=\dim_H\Lambda$ almost surely. We will see in the next theorem that this is the case already under weaker conditions.

Theorem \ref{thm:variational} reduces the problem of calculating the dimension of $F$ to solving an explicit optimization problem, which we do in Section \ref{sec:main-proof} to get our main result, the explicit Theorem \ref{thm:main}.  Note that the constrained maximum is well defined since the domain $[0,\infty]^N\cap \{\xx: h(\xx)=1\}$ is compact.  Because of the presence of a minimum in the objective function, the solution of the optimization problem breaks into three cases: We have to consider the maximum of $\psi$, the maximum of $\psi+\phi$, and the constrained maximum of $\psi$ subject to $\phi(\xx)=0$, all under the further constraint $h(\xx)=1$. (This explains the three cases in Theorem \ref{cor:two-maps-nonhomogeneous}.) Doing so leads to the following general result, in which $\tilde{s}$ is the solution of
\begin{equation} \label{eq:weighted-Moran}
\sum_{i=1}^N p_i r_i^{\tilde{s}}=\frac{1}{M}.
\end{equation}

\begin{theorem} \label{thm:main}
The Hausdorff dimension of the random subset $F$ is determined as follows:
\begin{enumerate}[(i)]
\item If
\begin{equation} \label{eq:M-large-condition}
\sum_{i=1}^N r_i^{s_0}\log(Mp_i)\geq 0,
\end{equation}
then $\dim_H F=s_0$, $\PP$-almost surely.
\item If
\begin{equation} \label{eq:M-small-condition}
\sum_{i=1}^N p_i r_i^{\tilde{s}}\log(Mp_i)\leq 0,
\end{equation}
then $\dim_H F=\tilde{s}\leq s_0$, $\PP$-almost surely, with equality if and only if $M=N$ and $p_i=1/N$ for $i=1,\dots,N$.
\item If
\begin{equation} \label{eq:main-case}
\sum_{i=1}^N r_i^{s_0} \log(Mp_i)<0<\sum_{i=1}^N p_i r_i^{\tilde{s}}\log(Mp_i),
\end{equation}
then there is a unique pair $(\hat{s},\hat{t})$ of real numbers, with $\hat{s}>0$, satisfying the equations
\begin{gather}
\sum_{i=1}^N r_i^{\hat{s}}(Mp_i)^{\hat{t}}=1, \label{eq:s-t-equation-a}\\
\sum_{i=1}^N r_i^{\hat{s}}(Mp_i)^{\hat{t}}\log(Mp_i)=0, \label{eq:s-t-equation-b}
\end{gather}
and $\dim_H F=\hat{s}<s_0$, $\PP$-almost surely.
\end{enumerate}
\end{theorem}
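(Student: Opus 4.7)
The plan is to solve the constrained maximisation in Theorem \ref{thm:variational} explicitly via Lagrange multipliers. The functions $\phi$ and $h$ are linear, and $\psi(\xx)=-\sum_i x_i\log(x_i/\sum_j x_j)$ is concave on $[0,\infty)^N$ (a sum of perspectives of $(a,b)\mapsto a\log(a/b)$), strictly so modulo the radial direction. The piecewise-linear term $\min\{0,\phi\}$ splits the objective into the concave function $\psi$ on $\{\phi\geq 0\}$ and the concave function $\psi+\phi$ on $\{\phi\leq 0\}$, agreeing along $\{\phi=0\}$; the global maximum is the larger of these two sub-maxima.

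Single-constraint Lagrange on $\{h=1\}$ for $\psi$ alone yields the critical point $x_k^\ast=Sr_k^{s_0}$ (with $S$ fixed by $h(\xx^\ast)=1$), value $\psi(\xx^\ast)=s_0$, and $\phi(\xx^\ast)=S\sum_i r_i^{s_0}\log(Mp_i)$. Similarly, for $\psi+\phi$ one obtains $x_k^{\ast\ast}=S'Mp_k r_k^{\tilde s}$, value $\tilde s$, and $\phi(\xx^{\ast\ast})=S'M\sum_i p_i r_i^{\tilde s}\log(Mp_i)$. In case (i), condition \eqref{eq:M-large-condition} places $\xx^\ast$ in $\{\phi\geq 0\}$, and the uniform bound $\psi+\min\{0,\phi\}\leq\psi\leq s_0$ is attained there. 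In case (ii), condition \eqref{eq:M-small-condition} places $\xx^{\ast\ast}$ in $\{\phi\leq 0\}$, and the uniform bound $\psi+\min\{0,\phi\}\leq\psi+\phi\leq\tilde s$ is attained; comparing this with the previous bound gives $\tilde s\leq s_0$, and equality forces $\psi(\xx^{\ast\ast})=s_0$ with $\phi(\xx^{\ast\ast})=0$, whence $\xx^{\ast\ast}=\xx^\ast$ by uniqueness of the $\psi$-maximiser, which in turn forces $Mp_i$ independent of $i$ and then $p_i=1/N$, $M=N$.

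In case (iii) both $\xx^\ast$ and $\xx^{\ast\ast}$ are infeasible for their respective half-spaces, so by concavity the maximum of each sub-objective on $\{h=1\}$ is attained on the seam $K:=\{h=1,\phi=0\}\cap[0,\infty)^N$, where both objectives equal $\psi$; the set $K$ is nonempty because \eqref{eq:main-case} forces $\phi(\xx^\ast)<0<\phi(\xx^{\ast\ast})$, so a convex combination of $\xx^\ast,\xx^{\ast\ast}$ lies in $K$. Two-constraint Lagrange on $K$ produces $x_k=Sr_k^{\hat s}(Mp_k)^{\hat t}$, and substituting into $h=1$ and $\phi=0$ reproduces \eqref{eq:s-t-equation-a} and \eqref{eq:s-t-equation-b}; the same algebraic identity used in cases (i) and (ii) then gives maximum value $\hat s$. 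Since the convex-combination point above lies in $K$ with all coordinates positive, $\psi>0$ there, which yields $\hat s>0$.

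The main obstacle will be the existence, uniqueness, and the strict bound $\hat s<s_0$. For this I would analyse the pressure $F(s,t):=\sum_i r_i^s(Mp_i)^t$: it is convex in $(s,t)$, strictly decreasing in $s$, and, because case (iii) implies some $Mp_i<1$ and some $Mp_j>1$, strictly convex in $t$ with $F(s,t)\to\infty$ as $t\to\pm\infty$ for every $s$. Hence $m(s):=\min_t F(s,t)$ is continuous and strictly decreasing. A weighted AM-GM lower bound on the two terms indexed by $i_1,i_2$ with $Mp_{i_1}<1<Mp_{i_2}$, with weights chosen to annihilate the $t$-dependence, produces a lower bound for $F(s,t)$ that diverges as $s\to-\infty$, while $F(s,t)\to 0$ as $s\to\infty$; thus $m(\hat s)=1$ has a unique solution, and strict convexity of $t\mapsto F(\hat s,t)$ pins down $\hat t$. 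The strict inequality $\hat s<s_0$ follows from $m(s_0)<F(s_0,0)=1$, where $G(s_0)<0$ displaces the minimiser of $F(s_0,\cdot)$ from $t=0$.
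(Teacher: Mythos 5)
Your proposal is correct and its overall architecture matches the paper's: identify the two Lagrange critical points $\xx^\ast$ (for $\psi$ alone) and $\xx^{\ast\ast}$ (for $\psi+\phi$) on $\{h=1\}$, use the case hypotheses to decide whether each lies in its own half-space $\{\phi\geq 0\}$ or $\{\phi\leq 0\}$, and in case (iii) invoke concavity (the paper says ``unimodality'') to push the constrained maximum onto the seam $\{h=1,\,\phi=0\}$, where a two-constraint Lagrange computation gives $\hat s$; the equality analysis in (ii) via $\xx^{\ast\ast}=\xx^\ast$ is likewise the same. The one point where you take a genuinely different route is the existence, uniqueness and strict inequality $\hat s<s_0$ for case (iii), which the paper isolates as Lemma \ref{lem:s-t-existence}. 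There the authors solve $G_1(s,t)=1$ for $s=g(t)$, differentiate implicitly to show $g$ is strictly convex, and read off $g'(0)<0<g'(1)$ directly from the two inequalities in \eqref{eq:main-case}, which has the bonus of locating $\hat t\in(0,1)$. You instead fix $s$, define $m(s):=\min_t\sum_i r_i^s(Mp_i)^t$ (well-posed because case (iii) forces some $Mp_i<1$ and some $Mp_j>1$, giving coercivity in $t$ and strict convexity), show $m$ is continuous and strictly decreasing, get $m(s)\to\infty$ as $s\to-\infty$ via a weighted AM--GM bound that annihilates the $t$-dependence, and solve $m(\hat s)=1$; then $\hat s<s_0$ follows from $\partial_t F(s_0,0)=\sum_i r_i^{s_0}\log(Mp_i)<0$, which displaces the minimiser from $t=0$ and gives $m(s_0)<1$. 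Both arguments exploit the same convexity of the pressure but parametrise it dually; yours avoids implicit differentiation and is slightly more self-contained, while the paper's pins down the range of $\hat t$. Your argument for $\hat s>0$ (positivity of $\psi$ at the interior convex-combination point on the seam) is also a clean alternative to the paper's use of $Mp_{i_0}>1$.
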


\begin{remark}
{\rm
The conditions \eqref{eq:M-large-condition} and \eqref{eq:M-small-condition} are not mutually exclusive, but if both hold then we must have that $M=N$ and $p_i=1/N$ for $i=1,\dots,N$, in which case $\tilde{s}=s_0$. 
}
\end{remark}

The theorem simplifies considerably if the IFS is homogeneous, i.e. if all the maps have the same contraction ratio. The stated dimension in the next corollary corresponds to the upper bound of Allaart and Jones \cite{Allaart-Jones}.

\begin{corollary}[The homogeneous case] \label{cor:homogeneous}
Assume $r_i=r\in(0,1)$ for $i=1,\dots,N$. Define the two quantities
\[
L(N,\pp):=\prod_{i=1}^N p_i^{-p_i}, \qquad U(N,\pp):=\prod_{i=1}^N p_i^{-1/N},
\]
and observe that $L(N,\pp)\leq N\leq U(N,\pp)$. We have, $\PP$-almost surely,
\[
\dim_H F=\dim_B F=-\frac{\log\rho}{\log r},
\]
where $\rho$ is determined as follows:
\begin{enumerate}[(i)]
\item If $M\leq L(N,\pp)$, then $\rho=M$.
\item If $L(N,\pp)\leq M\leq U(N,\pp)$, then 
\[
\rho=\sum_{i=1}^N (Mp_i)^\lambda,
\]
where $\lambda\in[0,1]$ is the unique solution of
\begin{equation} \label{eq:mu-equation}
\sum_{i=1}^N p_i^\lambda\log(Mp_i)=0.
\end{equation}
\item If $M\geq U(N,\pp)$, then $\rho=N$.
\end{enumerate}
\end{corollary}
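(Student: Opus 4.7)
The plan is to specialize Theorem \ref{thm:main} to the homogeneous case $r_i=r$ and unpack each of its three cases one by one; in this setting the passage is essentially bookkeeping. The Moran equation \eqref{eq:Moran} yields $r^{s_0}=1/N$, while \eqref{eq:weighted-Moran} collapses to $r^{\tilde s}=1/M$, so whenever Theorem \ref{thm:main} returns $\dim_H F=s_0$ or $\tilde s$, we immediately recover $\rho=N$ or $\rho=M$ respectively, matching parts (iii) and (i) of the corollary.

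Next I would translate the case hypotheses \eqref{eq:M-large-condition} and \eqref{eq:M-small-condition} into conditions on $M$ alone. After dividing out the common factor $r^{s_0}=1/N$, condition \eqref{eq:M-large-condition} reads $\sum_i\log(Mp_i)\ge 0$, equivalently $M^N\prod p_i\ge 1$, which is exactly $M\ge U(N,\pp)$. The analogous simplification turns \eqref{eq:M-small-condition} into $\sum_i p_i\log(Mp_i)\le 0$, i.e.\ $M\le L(N,\pp)$. Thus the three cases of Theorem \ref{thm:main} partition the parameter space into exactly the three regimes of the corollary. For the middle regime I would then simplify \eqref{eq:s-t-equation-a}--\eqref{eq:s-t-equation-b} with $r_i=r$: pulling $r^{\hat s}$ out of both sums, and then $M^{\hat t}$ out of \eqref{eq:s-t-equation-b}, shows that $\lambda:=\hat t$ solves \eqref{eq:mu-equation} and that \eqref{eq:s-t-equation-a} yields $r^{\hat s}=1/\rho$ with $\rho=\sum_i(Mp_i)^\lambda$, giving the stated formula.

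The only genuinely non-algebraic step is to certify that the $\lambda$ in the corollary actually lies in $[0,1]$. For this I would rewrite \eqref{eq:mu-equation} in the form $\log M=\phi(\lambda)$, where $\phi(\lambda):=-\sum p_i^\lambda\log p_i\big/\sum p_i^\lambda$, observe directly that $\phi(1)=\log L$ and $\phi(0)=\log U$, and then compute $\phi'(\lambda)=-\mathrm{Var}_{q(\lambda)}(\log p_i)\le 0$ where $q_i(\lambda)\propto p_i^\lambda$ is the tilted probability measure. Monotonicity of $\phi$ then makes it a decreasing bijection $[0,1]\to[\log L,\log U]$, so $M\in[L,U]$ forces $\lambda\in[0,1]$ and uniqueness is automatic. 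The stated bounds $L\le N\le U$ reduce to Jensen's inequality for the Shannon entropy and to AM--GM applied to $(Np_1,\dots,Np_N)$, whose arithmetic mean equals $1$. The main obstacle, really the only nontrivial step, is this short variance computation for $\phi'$; everything else is direct substitution into Theorem \ref{thm:main}.
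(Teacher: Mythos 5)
Your proposal is correct and follows the same route as the paper: specialize Theorem \ref{thm:main} to $r_i=r$, read off $r^{s_0}=1/N$ and $r^{\tilde s}=1/M$, and simplify the three case hypotheses and the equations \eqref{eq:s-t-equation-a}--\eqref{eq:s-t-equation-b} by dividing out the common powers of $r$ and $M$. The only extra content in your writeup is the explicit variance computation certifying $\lambda\in[0,1]$ and its uniqueness; the paper's proof of the corollary does not spell this out, since Lemma \ref{lem:s-t-existence} already yields $\hat{t}\in(0,1)$ in the interior regime.
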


\begin{remark}
{\rm
The hypothesis of case (ii) is always satisfied when $M=N$, since
\[
\sum_{i=1}^N p_i\log p_i\geq -\log N=\log\bigg(\frac{1}{N}\sum_{i=1}^N p_i\bigg)\geq \frac{1}{N}\sum_{i=1}^N\log p_i,
\]
by concavity of the logarithm. But the range of values of $M$ satisfying $L(N,\pp)\leq M\leq U(N,\pp)$ can be much larger. For instance, take $N=3$ and $\pp=(0.05,0.2,0.75)$. Then $\prod_{i=1}^N p_i^{-p_i}\approx 1.9886$ and $\big(\prod_{i=1}^N p_i\big)^{-1/N}\approx 5.1087$, so $M$ could be $2,3,4$ or $5$. 
}
\end{remark}

\begin{remark}
{\rm
Consider the homogeneous case: $r_i=r$ for each $i$, and assume $M=N$ and $p_i=1/N$ for $i=1,\dots,N$. Then Corollary \ref{cor:homogeneous} implies $\dim_H F=s_0=-\log N/\log r$. It was shown in \cite[Proposition 5.1]{Allaart-Jones} that $\mathcal{H}^{s_0}(F)=0$ almost surely in this case. We suspect that $\mathcal{H}^{s_0}(F)>0$ when $M>N$, but have not been able to prove this. (On the other hand, something even stronger can be shown when $M\geq 2N$, namely with $\PP$-probability 1, $F$ contains a similar copy of the full attractor $\Lambda$ of the IFS $\{f_1,\dots,f_N\}$, and $F$ is even equal to all of $\Lambda$ with positive probability -- see \cite[Remark 5.6]{Allaart-Jones}.)
}
\end{remark}

The rest of this article is organized as follows: In Section \ref{sec:sketch} we sketch the proof of Theorem \ref{thm:variational} to give the reader an intuitive understanding.  In Section \ref{sec:stopping-set} we show that the dimension of $F$ is given by the exponential growth rate of the expected cardinality of a certain random stopping set $\tilde{\Gamma}_n$. The estimate from below uses the main theorem of Mauldin and Williams \cite{Mauldin-Williams} on the Hausdorff dimension in random recursive constructions. 
We derive an upper estimate for the cardinality of $\tilde{\Gamma}_n$ in Section \ref{sec:upper-bound}, and a lower estimate in Section \ref{sec:lower-bound}. In Section \ref{sec:main-proof} we solve the constrained optimization problem of Theorem \ref{thm:variational} to deduce Theorem \ref{thm:main}. Finally, in Section \ref{sec:corollaries}, we prove the special cases given in Theorem \ref{cor:two-maps-homogeneous}, Theorem \ref{cor:two-maps-nonhomogeneous} and Corollary \ref{cor:homogeneous}.

\section{Sketch of proof of Theorem \ref{thm:variational}} \label{sec:sketch}

Here we sketch the main ideas of the proof and explain where the maps $\psi$, $\phi$ and $h$ come from. This section may be skipped without loss of continuity.

To start, we sketch why 
\begin{equation} \label{eq:var-set}
\dim_B \Lambda=\max\big\{\psi(\xx): \xx\in[0,\infty)^N,\ h(\xx)=1\big\},
\end{equation}
where $\psi(\xx)$ and $h(\xx)$ are defined by \eqref{eq:psi} and \eqref{eq:h}, respectively. Although this identity can be deduced from the Moran equation using the method of Lagrange multipliers, we give a more direct explanation here because it will help us introduce the main concepts of the proof of Theorem \ref{thm:variational}.
For ease of exposition,  we assume that $\Lambda \subset \RR$ is a self-similar set satisfying the open set condition with the open set $U$ being an  interval. Put $I:=\overline{U}$.

For each $n\in\NN$, we cover $\Lambda$ by intervals of length $\approx e^{-n}$. More precisely, define the stopping set
\begin{equation} 
\Gamma_n:=\{\ii=i_1\dots i_l\in\{1,\dots,N\}^*: r_{i_1}\dots r_{i_l}\leq e^{-n}<r_{i_1}\dots r_{i_l-1}\}.
\end{equation}
The intervals $I_{\ii}:=f_{i_1} \circ\dots \circ f_{i_l}(I)$, for $\ii=i_1\dots i_l\in\Gamma_n$ thus form an $e^{-n}$-cover of $\Lambda$, and these intervals do not overlap due to the open set condition. We need to estimate the cardinality of this cover. Suppose that $\ii\in\Gamma_n$ and that for each $j$, the letter $j$ occurs $k_j$ times in the word $\ii$; that is,
\begin{equation} \label{eq:frequencies}
\#\{t\leq |\ii|:i_t=j\}=k_j,\qquad j=1,\dots,N. 
\end{equation}
Then $\ii\in\Gamma_n$ implies
\begin{equation} \label{eq:kisize}
r_{\min}e^{-n} \leq \prod_i r_i^{k_i} \leq e^{-n},
\end{equation}
where $r_{\min}:=\min_{1\leq i\leq N} r_i$. Let $\Gamma_n(k_1,\dots,k_N)$ denote the set of all words $\ii\in\Gamma_n$ satisfying \eqref{eq:frequencies}. Note these words all have the same length $l=k_1+\dots+k_N$. By Stirling's approximation, we have
\[
\#\Gamma_n(k_1, \dots, k_N)=\binom{k_1+\dots+k_N}{k_1,\dots,k_N} \approx \prod_{i=1}^N \alpha_i^{-k_i},
\]
where $\alpha_i:=\frac{k_i}{k_1+\dots+k_N}$ and the $\approx$ is understood to be up to polynomial factors (see Lemma \ref{lem:multinomial-estimate} for a more precise version). Setting $x_i:=k_i/n$ and $\xx=(x_1,\dots,x_N)$, we thus obtain
\[
\#\Gamma_n(k_1, \dots, k_N)\approx \prod_{i=1}^N \alpha_i^{-k_i}=e^{n\psi(\xx)}.
\]
Furthermore, there are at most polynomially many distinct tuples $(k_1, \dots, k_N)$ satisfying (\ref{eq:kisize}). 
Hence, again ignoring polynomial factors, 
\begin{align*}
\#\Gamma_n &\approx \max\{\#\Gamma_n(k_1, \dots, k_N): (k_1,\dots,k_N)\ \mbox{satisfies \eqref{eq:kisize}}\}\\
& \approx \exp\big(n\max\{\psi(\xx): h(\xx)=1\}\big),
\end{align*}
for large $n$, since (\ref{eq:kisize}) translates into $h(\xx)=1+O(1/n)$. From these asymptotics, \eqref{eq:var-set} follows.

How does this change when one instead considers the random subset $F$ of $\Lambda$? To sketch the proof of the upper bound and explain how the function $\phi$ comes in, we fix again a scale $n$ and observe that for a word $\ii\in\Gamma_n$, the interval $I_\ii$ is only needed in a covering of $F$ if $I_\ii\cap F\neq\emptyset$. Hence we consider the (random) subcollection
\begin{align*}
\tilde{\Gamma}_n:&=\{\ii\in\Gamma_n: I_\ii\cap F\neq\emptyset\}\\
&=\{\ii\in\Gamma_n:\ \mbox{at least one path in $\mathcal{P}_{|\ii|}$ is labeled $\ii$}\}.
\end{align*}
In parallel with the above, we also let $\tilde{\Gamma}_n(k_1,\dots,k_N)$ denote the set of all words $\ii\in\tilde{\Gamma}_n$ satisfying \eqref{eq:frequencies}.
Put
\[
a_{\mathbf{i}}:=\PP(I_{\mathbf{i}} \cap F \neq \emptyset).
\]
Then
\begin{align*}
\EE(\#\tilde{\Gamma}_n)&=\sum_{k_1,\dots,k_N} \EE\big(\#\tilde{\Gamma}_n(k_1,\dots,k_N)\big)\\
&\lesssim \max_{k_1,\dots,k_N} \EE\big(\#\tilde{\Gamma}_n(k_1,\dots,k_N)\big)\\
&=\max_{k_1,\dots,k_N} \sum_{\ii\in \Gamma_n(k_1, \dots, k_N)} a_\ii,
\end{align*}
where $\lesssim$ is again to be interpreted up to a polynomial factor, and the first maximum is over tuples $(k_1,\dots,k_N)$ satisfying \eqref{eq:kisize}.
Now for $\ii\in \Gamma_n(k_1, \dots, k_N)$ with length $l=k_1+\dots+k_N$, we estimate
\begin{align}
a_\ii&=\PP\left(\bigcup_{P\in \CP_l}\big\{\mbox{path $P$ gets label sequence $\ii$}\big\}\right) \label{eq:a_i-pointwise}\\
&\leq \min\left\{1,\sum_{P\in \CP_l} \PP\big(\mbox{path $P$ gets label sequence $\ii$}\big)\right) \notag\\
&\leq \min\left\{1,M^l\prod_{j=1}^N p_j^{k_j}\right\}=\min\left\{1,\prod_{j=1}^N (Mp_j)^{k_j}\right\}.\notag
\end{align}
Combining the last two estimates, we obtain
\begin{align}
\EE(\#\tilde{\Gamma}_n) &\lesssim \max_{k_1,\dots,k_N} \binom{k_1+\dots+k_N}{k_1,\dots,k_N} \min\left\{1,\prod_{j=1}^N (Mp_j)^{k_j}\right\} \label{eq:discrete-maximum}\\
&\approx \exp\big(n\max\{\psi(\xx)+\min\{0, \phi(\xx)\}: h(\xx)=1\}\big), \notag
\end{align}
which explains the presence of $\phi(\xx)$ in Theorem \ref{thm:variational}. The proof of the upper bound is completed by a standard application of Markov's inequality and the Borel-Cantelli lemma; see Section \ref{sec:upper-bound} for the details.


We next sketch the proof of the lower bound.  Adapting a technique of Liu and Wu \cite{Liu-Wu}, we first construct a sequence $\{F_n\}$ of (random) subsets of $F$ such that for each $n$, $F_n$ is the limit set of a Mauldin-Williams random recursive construction (see \cite{Mauldin-Williams}) and satisfies
\begin{equation} 
\dim_H F_n\sim\frac{\log \EE(\#\tilde{\Gamma}_n)}{n}.
\end{equation}
The details of this construction are given in Section \ref{sec:stopping-set}.
Letting $n\to\infty$ then yields
\[
\dim_H F\geq \liminf_{n\to\infty} \frac{\log \EE(\#\tilde{\Gamma}_n)}{n}.
\]
Since $\EE(\#\tilde{\Gamma}_n)=\sum_{\ii\in\Gamma_n}a_\ii$, the task then turns to estimating the probabilities $a_{\mathbf{i}}$ from below.  This would be easy if the paths in the tree were completely independent, as $a_{\mathbf{i}}$ would then be constant on $\Gamma_n(k_1, \dots, k_N)$,  but the cross-dependence of the different paths due to shared edges can make some of the $a_{\mathbf{i}}$ exponentially small.  For example,  if $p_1$ is such that $Mp_1<1$ and $\ii$ is such that $i_1=\dots=i_{k_1}=1$, then by reasoning analogous to what we used for the upper bound,
\[
a_\ii\leq (Mp_1)^{k_1},
\]
and since each $k_i$ is of order $n$ for the tuple $(k_1,\dots,k_N)$ maximizing the expression in \eqref{eq:discrete-maximum}, this shows that $\min_{\ii\in\Gamma_n}a_\ii\to 0$ exponentially fast. Thus, in order to estimate $\sum_{\ii\in\Gamma_n}a_\ii$ from below, we need to exclude highly ordered words as in the above example. This motivates the definition of the special subset $D_n\subset\Gamma_n$ in the proof (see Section \ref{sec:lower-bound}),  which roughly speaking consists of the sufficiently ``random-looking" sequences $\mathbf{i}$.  Via standard estimates we can show that the number of these ``random-looking" sequences is not much smaller than the cardinality of $\Gamma_n(k_1,\dots,k_N)$, and for elements in $D_n$,  we are able to show a sufficient lower bound on $a_{\mathbf{i}}$ via an iteration.

\section{The random stopping set} \label{sec:stopping-set}

For each finite word $\ii\in\{1,\dots,N\}^*$, let $J_\ii:=f_\ii(\bar{U})$, where we write $f_\ii=f_{i_1}\circ\dots\circ f_{i_n}$ for $\ii=(i_1,\dots,i_n)$. We also let $\mathcal{P}_l$ denote the set of all paths of length $l$ in the $M$-ary tree starting from the root, for $l\in\NN$. To simplify the presentation somewhat, we assume without loss of generality that $|U|=1$. Denote
\[
r_{\min}:=\min_{1\leq i\leq N}r_i \qquad\mbox{and}\qquad r_{\max}:=\max_{1\leq i\leq N}r_i.
\]
For each $n\in\NN$, we define the stopping set
\begin{equation} \label{eq:stopping-set}
\Gamma_n:=\{\ii=i_1\dots i_l\in\{1,\dots,N\}^*: r_{i_1}\dots r_{i_l}\leq e^{-n}<r_{i_1}\dots r_{i_l-1}\}.
\end{equation}
Observe that the collection $\{J_\ii: \ii\in\Gamma_n\}$ is an $e^{-n}$-cover of $\Lambda$, hence of $F$. However, not all of these sets are needed to cover $F$, so we define also the smaller (random) set
\begin{equation} \label{eq:stopping-subset}
\tilde{\Gamma}_n:=\{\ii\in\Gamma_n:\ \mbox{at least one path in $\mathcal{P}_{|\ii|}$ is labeled $\ii$}\}.
\end{equation}
Then $\{J_\ii: \ii\in\tilde{\Gamma}_n\}$ is a more efficient $e^{-n}$-cover of $F$. Since the sets in this cover are roughly the same size, we need to estimate their number. In fact, as the next proposition shows, it is sufficient to determine the exponential growth rate of the {\em expected} cardinality of $\tilde{\Gamma}_n$. Let $\EE$ denote the expectation operator associated with the probability measure $\PP$.

\begin{proposition} \label{prop:growth-rate}
We have, $\PP$-almost surely,
\[
\liminf_{n\to\infty} \frac{\log \EE(\#\tilde{\Gamma}_n)}{n}\leq \dim_H F\leq \overline{\dim}_B F\leq \limsup_{n\to\infty} \frac{\log \EE(\#\tilde{\Gamma}_n)}{n}.
\]
\end{proposition}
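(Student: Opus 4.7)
The inequality $\dim_H F\leq\overline{\dim}_B F$ is standard, so I would prove the outer two inequalities separately. The upper bound is a Markov/Borel--Cantelli argument applied to the natural covering by $\{J_{\ii}:\ii\in\tilde{\Gamma}_n\}$. The lower bound embeds a Mauldin--Williams random recursive fractal into $F$ and invokes the main theorem of \cite{Mauldin-Williams}.

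\textbf{Upper bound.} By the definition of $\tilde{\Gamma}_n$ and the normalization $|U|=1$, the family $\{J_{\ii}:\ii\in\tilde{\Gamma}_n\}$ is a cover of $F$ by sets of diameter at most $e^{-n}$. Hence the $e^{-n}$-covering number satisfies $N(F,e^{-n})\leq\#\tilde{\Gamma}_n$, so $\overline{\dim}_B F\leq\limsup_{n\to\infty}\frac{\log\#\tilde{\Gamma}_n}{n}$ almost surely. To pass from $\#\tilde{\Gamma}_n$ to its expectation, I would fix $\eps>0$ and apply Markov's inequality,
\[
\PP\bigl(\#\tilde{\Gamma}_n\geq e^{n\eps}\,\EE(\#\tilde{\Gamma}_n)\bigr)\leq e^{-n\eps}.
\]
This bound is summable in $n$, so Borel--Cantelli gives $\#\tilde{\Gamma}_n\leq e^{n\eps}\EE(\#\tilde{\Gamma}_n)$ for all sufficiently large $n$, $\PP$-a.s. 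Intersecting the resulting full-measure events over a sequence $\eps_k\downarrow 0$ yields $\limsup_n\frac{\log\#\tilde{\Gamma}_n}{n}\leq\limsup_n\frac{\log\EE(\#\tilde{\Gamma}_n)}{n}$ almost surely, which is the upper bound claimed.

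\textbf{Lower bound.} For each fixed $n$, I would construct a random recursive subset $F_n\subseteq F$ whose Hausdorff dimension can be computed directly. Following the Liu--Wu-style scheme sketched in Section \ref{sec:sketch}, to each $\ii\in\tilde{\Gamma}_n$ assign a canonical path $P_{\ii}$ in $\mathcal{T}_M$ whose first $|\ii|$ labels spell out $\ii$; by the Bernoulli property of $\omega$, the subtree rooted at the endpoint of $P_{\ii}$ is an independent copy of $\mathcal{T}_M$ with the same label distribution, so it can be used to sample an independent second-generation stopping set inside $J_{\ii}$, and iterate. This produces a Mauldin--Williams random recursive fractal $F_n$, and the coupling ensures $F_n(\omega)\subseteq F(\omega)$ for every $\omega$. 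Under the separation hypothesis inherited from (O1)--(O3), the main theorem of \cite{Mauldin-Williams} identifies $\dim_H F_n$ as the unique $d_n\geq 0$ solving
\[
\EE\Bigl[\sum_{\ii\in\tilde{\Gamma}_n} r_{\ii}^{d_n}\Bigr]=1.
\]
Since $r_{\ii}\in(r_{\min}e^{-n},e^{-n}]$ for every $\ii\in\tilde{\Gamma}_n$, substituting $d=\log\EE(\#\tilde{\Gamma}_n)/(n+|\log r_{\min}|)$ makes the left-hand side at least $1$; monotonicity in $d$ then forces
\[
d_n\geq\frac{\log\EE(\#\tilde{\Gamma}_n)}{n+|\log r_{\min}|}=\frac{\log\EE(\#\tilde{\Gamma}_n)}{n}\bigl(1-o(1)\bigr),
\]
and $\log\EE(\#\tilde{\Gamma}_n)/n$ is uniformly bounded by the crude estimate $\#\tilde{\Gamma}_n\leq N^{\lceil n/|\log r_{\max}|\rceil+1}$. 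Since $\dim_H F\geq\dim_H F_n=d_n$ for every $n$, taking $\liminf_n$ finishes the proof.

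\textbf{Main obstacle.} The delicate step is the lower bound: one must verify carefully that the iterated stopping-set construction genuinely meets the hypotheses of Mauldin--Williams, in particular the random open set condition for the random basic sets $J_{\ii}$ across generations. This is precisely where hypothesis (O3) is designed to enter, ensuring that boundary overlaps between the $J_{\ii}$ do not accumulate and that the Hausdorff-dimension statement of \cite{Mauldin-Williams} applies verbatim to $F_n$. The upper bound, by contrast, is a completely routine combination of the basic covering estimate with a Markov/Borel--Cantelli argument.
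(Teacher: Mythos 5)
Your proposal is correct and follows essentially the same route as the paper: the upper bound is proved by Markov plus Borel--Cantelli on the covering count $\#\tilde{\Gamma}_n$, and the lower bound constructs, for each $n$, a Mauldin--Williams random recursive subset $F_n\subset F$ via a Liu--Wu-style selection of one path per label in $\tilde{\Gamma}_n$ at each generation, then reads off $\dim_H F_n$ from the expectation equation and compares with $\log\EE(\#\tilde{\Gamma}_n)/(n+|\log r_{\min}|)$. The paper spells out the ``canonical path'' selection more carefully (via lexicographic tie-breaking in the sets $\CB_k$, $\CL_k$, which is exactly what guarantees the offspring counts $Y_\ii$ are i.i.d.\ copies of $\#\tilde{\Gamma}_n$), but that is the same idea you sketch.
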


\begin{proof}
We first prove the upper bound. Let
\[
\lambda:=\limsup_{n\to\infty} \frac{\log \EE(\#\tilde{\Gamma}_n)}{n}.
\]
Fix $\eps>0$. Put $Z_n:=\#\tilde{\Gamma}_n$. Then, for all large enough $n$,
\[
\PP\big(Z_n>e^{(\lambda+2\eps)n}\big)\leq \frac{\EE(Z_n)}{e^{(\lambda+2\eps)n}}\leq \frac{e^{(\lambda+\eps)n}}{e^{(\lambda+2\eps)n}}=e^{-\eps n}.
\]
As the series $\sum_{n=1}^\infty \PP\big(Z_n>e^{(\lambda+2\eps)n}\big)$ converges,  by the Borel-Cantelli lemma, $Z_n\leq e^{(\lambda+2\eps)n}$ for all sufficiently large $n$ with probability one. Therefore, since $f_{\ii}(U)$ has diameter approximately $e^{-n}$ for any $\ii\in\Gamma_n$,
\[
\overline{\dim}_B F\leq \lambda+2\eps \qquad\mbox{$\PP$-almost surely},
\]
and letting $\eps\to 0$ along a discrete sequence gives
$\overline{\dim}_B F\leq \lambda$, $\PP$-almost surely.

For the lower bound, we borrow a technique of Liu and Wu \cite{Liu-Wu}. The idea is to construct a sequence $\{F_n\}$ of (random) subsets of $F$ such that for each $n$, $F_n$ is the limit set of a Mauldin-Williams random recursive construction and satisfies
\begin{equation} \label{eq:subset-dimension}
\dim_H F_n\sim\frac{\log \EE(Z_n)}{n}.
\end{equation}
Letting $n\to\infty$ then yields the desired lower bound.

For the remainder of this proof, let $\CP^*$ be the set of all finite paths in $\mathcal{T}_M$ that start at the root.
Fix $n\in\NN$. For $\omega\in\Omega$, let $\CL_1=\CL_1(\omega)=\tilde{\Gamma}_n$. Let $\CB_1=\CB_1(\omega)$ be the set of finite paths $P=(P(1),\dots,P(l))\in\CP^*$ which receive a label sequence from $\CL_1(\omega)$ and such that no path of the same length as $P$ which is lexicographically smaller than (i.e. to the left of) $P$ receives the same label sequence as $P$. Thus, $\#\CB_1=\#\CL_1$. Observe also that no path in $\CB_1$ extends any other, because no word in $\CL_1$ extends any other. (In terms of the branching random walk, if there are two or more particles occupying the same interval of length approximately $e^{-n}$, we remove all but one of them before continuing the process.)

Next, we proceed inductively. For $k=1,2,\dots$, assume $\CB_k$ and $\CL_k$ have been constructed. 	Let $\CB_{k+1}$ be the set of paths $P=P'P''$ in $\CP^*$ such that $P'\in\CB_k$ and $P''$ receives a label sequence in $\Gamma_n$, and furthermore, no path of the same length as $P$ which is lexicographically smaller than $P$ receives the same label sequence as $P$. (We continue to remove ``duplicate" particles in the branching random walk.)

Let $\CL_{k+1}$ be the set of all label sequences of paths in $\CB_{k+1}$. Thus (by induction), $\#\CB_{k+1}=\#\CL_{k+1}$. For each $\ii\in\CL_k$, let $Y_\ii$ be the number of words in $\CL_{k+1}$ that begin with $\ii$.

For each $k$, by construction, the sets $J_\ii:=f_\ii(\bar{U}): \ii\in\CL_k$, are non-overlapping since the IFS $\{f_1,\dots,f_N\}$ satisfies the open set condition. Furthermore, for $\ii\in\CL_k$,
\[
\#\{\ii'\in\CL_{k+1}: J_{\ii'}\subset J_\ii\}=Y_\ii.
\]
Note that the random variables $Y_\ii: \ii\in\bigcup_{k=1}^\infty \CL_k$ are i.i.d. with the same distribution as $Z_n$. Hence, the set
\[
F_n:=\bigcap_{k=1}^\infty \bigcup_{\ii\in\CL_k}J_\ii
\]
is the limit set of a Mauldin-Williams random recursive construction (see \cite{Mauldin-Williams}). Clearly, $F_n\subset F$. Since $\PP(Z_n\geq 1)=1$, $F_n$ is nonempty with probability one. By the main theorem of \cite{Mauldin-Williams}, the Hausdorff dimension of $F_n$ is the unique number $s_n$ satisfying
\[
\EE\left(\sum_{\ii\in\tilde{\Gamma}_n}(r_{i_1}\dots r_{i_{|\ii|}})^{s_n}\right)=1.
\]
(By condition (O3), the set $K:=\bar{U}$ satisfies the assumption of Mauldin and Williams that $K$ is the closure of its interior.)
By the definition of $\Gamma_n$, this implies
\[
\frac{\log \EE(Z_n)}{n-\log r_{\min}}\leq\dim_H F_n=s_n\leq \frac{\log \EE(Z_n)}{n},
\]
from which \eqref{eq:subset-dimension} follows.
\end{proof}

In view of Proposition \ref{prop:growth-rate},  to complete the proof of Theorem \ref{thm:variational}, it remains to show that
\begin{equation} \label{eq:liminf-limsup-sandwich}
\limsup_{n\to\infty} \frac{\log \EE(\#\tilde{\Gamma}_n)}{n}\leq s^*\leq \liminf_{n\to\infty} \frac{\log \EE(\#\tilde{\Gamma}_n)}{n}
\end{equation}
(and hence, the limit of $n^{-1}\log \EE(\#\tilde{\Gamma}_n)$ actually exists),
where $s^*$ is the right-hand side of \eqref{eq:variational-principle}, i.e. 
\[
s^*:=\max\big\{\psi(\xx)+\min\{0,\phi(\xx)\}: \xx\in[0,\infty)^N,\ h(\xx)=1\big\},
\]
where $\psi$, $\phi$ and $h$ were defined in \eqref{eq:psi}, \eqref{eq:phi} and \eqref{eq:h}, respectively.
Our proofs of these inequalities are mostly combinatorial and necessarily somewhat technical. We prove the first inequality in Section \ref{sec:upper-bound}, and the second inequality in Section \ref{sec:lower-bound}.

\section{Proof of the upper bound} \label{sec:upper-bound}

In this section we prove the first inequality in \eqref{eq:liminf-limsup-sandwich}:

\begin{proposition} \label{prop:key-upper-estimate}
We have
\[
\limsup_{n\to\infty}\frac{\log \EE(\#\tilde{\Gamma}_n)}{n}\leq s^*.
\]
\end{proposition}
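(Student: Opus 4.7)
The plan is to partition $\Gamma_n$ by letter-frequencies and control each block separately, following the heuristic outline in Section \ref{sec:sketch}. For each tuple $(k_1,\dots,k_N)\in\NN_0^N$ satisfying \eqref{eq:kisize}, let $\Gamma_n(k_1,\dots,k_N)$ denote the subset of $\Gamma_n$ consisting of words in which the letter $j$ appears exactly $k_j$ times, and set $l=k_1+\dots+k_N$. The number of such tuples is at most polynomial in $n$ (at most $(n/\log(1/r_{\max})+1)^N$). Writing $a_\ii=\PP(I_\ii\cap F\neq\emptyset)$, we have $\EE(\#\tilde\Gamma_n)=\sum_{\ii\in\Gamma_n}a_\ii$, and for any $\ii\in\Gamma_n(k_1,\dots,k_N)$ a union bound over the $M^l$ paths of length $l$ in $\CT_M$ gives
\[
a_\ii\leq \min\Bigl\{1,\ M^l\prod_{j=1}^N p_j^{k_j}\Bigr\}=\min\Bigl\{1,\ \prod_{j=1}^N (Mp_j)^{k_j}\Bigr\}.
\]

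The next step is to translate the counting into the language of $\psi$, $\phi$, $h$. Set $x_i:=k_i/n$ and $\xx=(x_1,\dots,x_N)$, and let $\alpha_i:=k_i/l$. A Stirling-type estimate (which I plan to record as a lemma, say Lemma \ref{lem:multinomial-estimate}) yields
\[
\#\Gamma_n(k_1,\dots,k_N)\leq\binom{l}{k_1,\dots,k_N}\leq C(n)\prod_{i=1}^N\alpha_i^{-k_i}=C(n)\exp\bigl(n\psi(\xx)\bigr),
\]
where $C(n)$ grows at most polynomially in $n$; here I use the identity $\sum_i k_i\log(1/\alpha_i)=\sum_i k_i(\log l-\log k_i)=n\psi(\xx)$. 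Similarly $\sum_j k_j\log(Mp_j)=n\phi(\xx)$, and the stopping condition \eqref{eq:kisize} reads $1\leq h(\xx)\leq 1+(-\log r_{\min})/n$. Combining,
\[
\EE(\#\tilde\Gamma_n)\leq C(n)\cdot \#\{\text{admissible tuples}\}\cdot \max_{\xx\in A_n}\exp\bigl(n[\psi(\xx)+\min\{0,\phi(\xx)\}]\bigr),
\]
where $A_n:=\{\xx\in[0,\infty)^N:\ 1\leq h(\xx)\leq 1-(\log r_{\min})/n\}$.

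To finish, I need to replace the maximum over $A_n$ by the maximum over $\{h(\xx)=1\}$. For any $\xx\in A_n$ the rescaled vector $\xx':=\xx/h(\xx)$ lies on the constraint surface $\{h=1\}$ and satisfies $\xx=(1+O(1/n))\xx'$; since $\psi$ and $\phi$ are continuous and $1$-homogeneous up to an additive term that is continuous in $\xx$ (more precisely $\psi$ is $1$-homogeneous and $\phi$ is linear), both $\psi(\xx)-\psi(\xx')$ and $\phi(\xx)-\phi(\xx')$ are $O(1/n)$ uniformly on the compact slab $A_n$, bounded by a constant depending only on $r_{\min}$ and the $p_i$. Hence
\[
\max_{\xx\in A_n}\bigl\{\psi(\xx)+\min\{0,\phi(\xx)\}\bigr\}\leq s^*+O(1/n),
\]
and taking $n\to\infty$ in $\tfrac{1}{n}\log\EE(\#\tilde\Gamma_n)$ kills the logarithmic contribution of the polynomial prefactor $C(n)\cdot\#\{\text{tuples}\}$ and yields $\limsup_{n\to\infty}n^{-1}\log\EE(\#\tilde\Gamma_n)\leq s^*$.

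The main obstacle I anticipate is bookkeeping rather than conceptual: I must verify that the multinomial estimate holds uniformly (including when some $k_i=0$, which is handled by the convention $0\log 0=0$), and I must justify the homogeneity/scaling argument carefully when some coordinates of $\xx$ vanish. Both issues are routine but need attention; everything else is a clean chain of union bounds, Stirling, and continuity on a compact set.
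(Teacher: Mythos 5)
Your proposal follows the same overall strategy as the paper: partition $\Gamma_n$ by letter-frequency tuples $(k_1,\dots,k_N)$, apply the union bound $a_\ii\leq\min\{1,\prod_j(Mp_j)^{k_j}\}$, control the multinomial coefficient via Stirling, note the polynomial bound on the number of admissible tuples, and then replace the approximate constraint $1\leq h(\xx)\leq 1+O(1/n)$ by the exact constraint $h(\xx)=1$. The calculations check out, including the identities $\sum_i k_i\log(1/\alpha_i)=n\psi(\xx)$ and $\sum_j k_j\log(Mp_j)=n\phi(\xx)$.

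The one genuinely different step is how you pass from the slab $A_n$ to the surface $\{h=1\}$. The paper perturbs a single coordinate $\beta_j\mapsto\beta_j'$ and invokes uniform continuity of $\psi$ on a compact cube; you instead rescale $\xx'=\xx/h(\xx)$ and exploit the exact $1$-homogeneity of $\psi$ and linearity of $\phi$. Your version is in fact a bit cleaner: since $h(\xx)\geq 1$ the rescaling can only make $\min\{0,\phi\}$ smaller (if $\phi(\xx')\geq0$ both are $0$; if $\phi(\xx')<0$ then $\phi(\xx)=h(\xx)\phi(\xx')\leq\phi(\xx')$), and $\psi(\xx)-\psi(\xx')=(h(\xx)-1)\psi(\xx')=O(1/n)$ because $\psi$ is bounded on the compact set $\{h=1\}\cap[0,\infty)^N$. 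No $\delta$-$\eps$ uniform continuity argument or careful index selection is required, and the degenerate case $k_i=0$ is automatically covered since $\psi$ is continuous with the convention $0\log 0=0$. This would be a modest simplification of the paper's ``Claim.''
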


We use the following estimate for multinomial coefficients, which is an easy consequence of Stirling's approximation.

\begin{lemma}[{\cite[Lemma 7.4]{Allaart}}] \label{lem:multinomial-estimate}
Let $m\in\NN$, and let $k_1,\dots,k_N$ be nonnegative integers with $\sum_{i=1}^N k_i=m$. Put $\alpha_i:=k_i/m$ for $i=1,\dots,N$. Then
\begin{equation*}
\binom{m}{k_1,\dots,k_N}\leq 2\sqrt{m}\left(\prod_{i=1}^N \alpha_i^{-\alpha_i}\right)^m,
\end{equation*}
where we use the convention $0^0\equiv 1$.
\end{lemma}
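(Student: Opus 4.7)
The plan is a direct computation using Stirling's approximation, where (interpreting the ratio as $\alpha_i = k_i/m$, making $\prod_i \alpha_i^{-k_i}$ the standard entropy-type quantity $m^m/\prod_i k_i^{k_i}$) the assertion becomes the classical explicit-constant form of the entropy bound on multinomial coefficients.

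First I would dispose of the degenerate cases. If some $k_j=m$ and all other $k_i=0$, then the multinomial coefficient equals $1$ while the right-hand side is at least $2\sqrt{m}\geq 2$, so the bound is trivial. Henceforth assume $I:=\{i:k_i\geq 1\}$ has $|I|\geq 2$.

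Next, apply the two-sided form of Stirling's inequality
\[
\sqrt{2\pi n}\left(\frac{n}{e}\right)^n \leq n! \leq e\sqrt{n}\left(\frac{n}{e}\right)^n, \qquad n\geq 1,
\]
using the upper bound on $m!$ in the numerator and the lower bound on each $k_i!$ for $i\in I$ in the denominator. Because $\sum_i k_i=m$, the factors of $e^{-n}$ cancel exactly, yielding
\[
\binom{m}{k_1,\dots,k_N} \;\leq\; \frac{e\sqrt{m}}{\prod_{i\in I}\sqrt{2\pi k_i}} \cdot \frac{m^m}{\prod_{i\in I}k_i^{k_i}}.
\]
I then rewrite $m^m/\prod_{i\in I}k_i^{k_i}=\prod_{i\in I}(k_i/m)^{-k_i}=\prod_{i=1}^N\alpha_i^{-k_i}$, where terms with $k_i=0$ contribute a factor of $1$ under the convention $0^0=1$.

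It remains to check that the leading constant satisfies $e/\prod_{i\in I}\sqrt{2\pi k_i}\leq 2$. Since $|I|\geq 2$ and each $k_i\geq 1$ for $i\in I$, the denominator is at least $2\pi$, which comfortably dominates $e/2\approx 1.36$; in fact even $\sqrt{2\pi}>e/2$ alone would suffice. Combining this with the previous display yields the stated bound. The only mild technical point is the bookkeeping around zero-entries via the convention $0^0=1$; beyond that, the argument is purely a Stirling calculation with no substantial obstacle.
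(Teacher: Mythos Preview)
Your proposal is correct and follows exactly the route the paper indicates: the paper does not give a proof but simply cites the lemma and calls it ``an easy consequence of Stirling's approximation,'' which is precisely the direct Stirling computation you carry out. Your interpretation $\alpha_i=k_i/m$ (rather than the evident typo $k_i/N$) matches how the lemma is actually used later in the paper, and your handling of the constant via $|I|\ge 2$ and of the zero-entries via the $0^0=1$ convention is clean and sufficient.
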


\begin{proof}[Proof of Proposition \ref{prop:key-upper-estimate}]
For $\ii\in\{1,\dots,N\}^*$, let
\[
k_j(\ii):=\#\{\nu\leq|\ii|: i_\nu=j\}, \qquad j=1,\dots,N.
\]
Then
\begin{equation} \label{eq:first-mean-estimate}
\EE(\#\tilde{\Gamma}_n)=\sum_{\ii\in\Gamma_n}\PP(\ii\in\tilde{\Gamma}_n)\leq \sum_{\ii\in\Gamma_n}\min\left\{\prod_{j=1}^N(Mp_j)^{k_j(\ii)},1\right\},
\end{equation}
where the inequality was established in \eqref{eq:a_i-pointwise}.
If $\ii\in\Gamma_n$ and $k_j(\ii)=k_j$ for $j=1,\dots,N$, then
\begin{equation} \label{eq:product-sandwich}
\prod_{j=1}^N r_i^{k_i}\leq e^{-n}<r_{\min}^{-1}\prod_{j=1}^N r_j^{k_j}.
\end{equation}
Let $B_n$ denote the set of $N$-tuples $(k_1,\dots,k_N)$ for which these inequalities hold. By the rightmost inequality in \eqref{eq:product-sandwich},
\[
\sum_{j=1}^N k_j\log r_j>\log r_{\min}-n,
\]
and so (since $\log r_j<0$) certainly $k_j\log r_j>\log r_{\min}-n$ for each $j$. Hence
\begin{equation} \label{eq:k-bound}
k_j\leq \frac{\log r_{\min}-n}{\log r_j}\leq \frac{n+|\log r_{\min}|}{|\log r_{\max}|}, \qquad j=1,\dots,N,
\end{equation}
so there is a constant $C>0$ such that $\#B_n\leq Cn^N$ for all $n\in\NN$. Thus, from \eqref{eq:first-mean-estimate}, we obtain
\begin{align*}
\EE(\#\tilde{\Gamma}_n)&\leq \sum_{(k_1,\dots,k_N)\in B_n}\binom{k_1+\dots+k_N}{k_1,\dots,k_N}\min\left\{\prod_{j=1}^N(Mp_j)^{k_j},1\right\}\\
&\leq Cn^N\max_{(k_1,\dots,k_N)\in B_n}\binom{k_1+\dots+k_N}{k_1,\dots,k_N}\min\left\{\prod_{j=1}^N(Mp_j)^{k_j},1\right\}.
\end{align*}

Put $\beta_i:=k_i/n$; then $\beta_i\geq 0$ and \eqref{eq:product-sandwich} is equivalent to
\begin{equation} \label{eq:beta-sandwich}
\sum_{i=1}^N \beta_i \log r_i\leq -1<\sum_{i=1}^N \beta_i \log r_i-\frac{1}{n}\log r_{\min}.
\end{equation}
Let $B_n'$ denote the set of $N$-tuples $\boldsymbol{\beta}=(\beta_1,\dots,\beta_N)\in[0,\infty)^N$ satisfying these inequalities. Let
\[
\alpha_i:=\frac{k_i}{k_1+\dots+k_N}=\frac{\beta_i}{\beta_1+\dots+\beta_N}, \qquad i=1,\dots,N.
\]
By Lemma \ref{lem:multinomial-estimate}, we have
\[
\binom{k_1+\dots+k_N}{k_1,\dots,k_N}\leq 2(k_1+\dots+k_N)^{1/2}\left(\prod_{i=1}^N \alpha_i^{-\alpha_i}\right)^{k_1+\dots+k_N}.
\]
Note that $k_1+\dots+k_N\leq C'n$ for a suitable constant $C'$, by \eqref{eq:k-bound}. On the other hand, we can write $\alpha_i(k_1+\dots+k_N)=\alpha_i(\sum_j\beta_j)n=\beta_i n$. Thus,
\begin{align*}
\left(\prod_{i=1}^N \alpha_i^{-\alpha_i}\right)^{k_1+\dots+k_N}& \cdot \min\left\{\prod_{i=1}^N(Mp_i)^{k_i},1\right\}\\
&=\left[\left(\sum_i\beta_i\right)^{\sum_i\beta_i}\prod_i \beta_i^{-\beta_i}\cdot \min\left\{\prod_{i=1}^N(Mp_i)^{\beta_i},1\right\}\right]^n\\
&=\exp\big(n\{\psi(\boldsymbol{\beta})+\min\{0,\phi(\boldsymbol{\beta})\}\big).
\end{align*}
It follows that, for a suitable constant $C''$,
\begin{equation} \label{eq:Gamma-bound}
\EE(\#\tilde{\Gamma}_n) \leq 2C''n^N\sqrt{n}\sup_{\boldsymbol{\beta}\in B_n'} \exp\left(n\{\psi(\boldsymbol{\beta})+\min\{0,\phi(\boldsymbol{\beta})\}\right).
\end{equation}

Since the extra term $(1/n)\log r_{\min}$ in \eqref{eq:beta-sandwich} goes to zero as $n\to\infty$, we can replace the condition of $\boldsymbol{\beta}$ satisfying \eqref{eq:beta-sandwich} by $h(\boldsymbol{\beta})=-\sum\beta_i\log r_i=1$ without changing the last estimate by much. Specifically, we show:

\bigskip
{\bf Claim:} For each $\eps>0$ there exists an integer $n_0$ such that for each $n\geq n_0$ and each $\boldsymbol{\beta}\in B_n'$, there exists a vector $\boldsymbol{\beta}'\in[0,\infty)^N$ such that $h(\boldsymbol{\beta}')=1$ and
\[
\psi(\boldsymbol{\beta})+\min\{0,\phi(\boldsymbol{\beta})\}\leq \psi(\boldsymbol{\beta}')+\min\{0,\phi(\boldsymbol{\beta}')\}+2\eps.
\]

{\bf Proof of Claim:} Let $\eps>0$ be given. For any $\boldsymbol{\beta}\in B_n$, we have, as in \eqref{eq:k-bound},
\[
0\leq\beta_i\leq \frac{n\log r_{\min}-1}{n\log r_i}\leq \frac{\log r_{\min}-1}{\log r_i}\leq \frac{1+|\log r_{\min}|}{|\log r_{\max}|}=:K \qquad \forall i\in\{1,\dots,N\},
\]
by the second half of \eqref{eq:beta-sandwich}.
Recalling our convention that $0\log 0=0$, $\psi$ is continuous on $[0,K]^N$, hence it is uniformly continuous on this set. So there exists $\delta>0$ such that, whenever $|\beta_i-\beta_i'|<\delta$ for all $i$, we have $|\psi(\boldsymbol{\beta})-\psi(\boldsymbol{\beta}')|<\eps$. Let
\[
b:=\frac{\log r_{\min}}{\log r_{\max}}, \qquad \tilde{b}:=b\max_i|\log(Mp_i)|,
\]
and choose $n_0\in\NN$ so large that 
\begin{equation} \label{eq:n-must-be-large}
\frac{b}{n_0}<\delta, \qquad \frac{\tilde{b}}{n_0}<\eps \qquad\mbox{and} \qquad n_0\geq (N-1)|\log r_{\min}|.
\end{equation}
Given $n\geq n_0$ and $\boldsymbol{\beta}\in B_n'$, we construct a suitable vector $\boldsymbol{\beta}'$ as follows.
By the second half of \eqref{eq:beta-sandwich}, there is an index $j$ such that
\begin{equation} \label{eq:excluding-sum}
\sum_{i\neq j}\beta_i\log r_i>\frac{N-1}{N}\left(-1+\frac{\log r_{\min}}{n}\right)\geq -1,
\end{equation}
where the last inequality follows from \eqref{eq:n-must-be-large}.
Put $\beta_i'=\beta_i$ for $i\neq j$, and
\[
\beta_j':=-\frac{1+\sum_{i\neq j}\beta_i\log r_i}{\log r_j}.
\]
It follows at once that $h(\boldsymbol{\beta}')=1$, and $\beta_j'\geq 0$ by \eqref{eq:excluding-sum}. Furthermore,
\[
|\beta_j'-\beta_j|=\left|\frac{-1-\sum_i \beta_i\log r_i}{\log r_j}\right|\leq \frac{\log r_{\min}}{n\log r_{\max}}=\frac{b}{n}<\delta,
\]
where the first inequality follows from \eqref{eq:beta-sandwich}. Hence, $\psi(\boldsymbol{\beta})<\psi(\boldsymbol{\beta}')+\eps$. Finally, 
\begin{equation*}
\phi(\boldsymbol{\beta})\leq\phi(\boldsymbol{\beta}')+|\beta_j-\beta_j'||\log(Mp_j)|
\leq \phi(\boldsymbol{\beta}')+\frac{\tilde{b}}{n}<\phi(\boldsymbol{\beta}')+\eps,
\end{equation*}
and the Claim follows. $\diamond$

\bigskip

Combining the Claim with \eqref{eq:Gamma-bound} and noting that the factor $n^N\sqrt{n}$ is subexponential, we see that for each $\eps>0$ we can find $n_0\in\NN$ such that for all $n\geq n_0$,
\begin{align*}
\EE(\#\tilde{\Gamma}_n)&\leq e^{\eps n}\max\left\{\exp\left(n\big\{\psi(\boldsymbol{\beta})+\min\{0,\phi(\boldsymbol{\beta})\}\big\}\right): \boldsymbol{\beta}\in[0,\infty)^N, h(\boldsymbol{\beta})=1\right\}\\
&=e^{(s^*+\eps)n}.
\end{align*}
This completes the proof.
\end{proof}

\section{Proof of the lower bound} \label{sec:lower-bound}

In this section, we prove the second inequality in \eqref{eq:liminf-limsup-sandwich}:

\begin{proposition} \label{prop:key-lower-estimate}
We have
\[
\liminf_{n\to\infty}\frac{\log \EE(\#\tilde{\Gamma}_n)}{n}\geq s^*.
\]
\end{proposition}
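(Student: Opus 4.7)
The plan is a second moment argument applied to the counting variable $N_n(\ii)$ of paths in $\mathcal{P}_l$ carrying a prescribed label sequence $\ii$. Fix a maximizer $\xx^*=(x_1^*,\dots,x_N^*)\in[0,\infty)^N$ of the variational problem defining $s^*$; that is, $h(\xx^*)=1$ and $\psi(\xx^*)+\min\{0,\phi(\xx^*)\}=s^*$. For each $n$, set $k_j:=\lfloor x_j^*n\rfloor$ (with a bounded adjustment so that $-\sum_jk_j\log r_j\in[n-C,n]$), $l:=\sum_jk_j$, and let $\CL_n$ be the collection of words of length $l$ with frequency profile $(k_1,\dots,k_N)$ reordered so as to lie in $\Gamma_n$. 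Stirling's formula then yields $\#\CL_n\geq\tfrac{1}{l}\binom{l}{k_1,\dots,k_N}\geq e^{n\psi(\xx^*)-o(n)}$.

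For $\ii\in\CL_n$ let $N_n(\ii)$ count paths in $\mathcal{P}_l$ labeled $\ii$, so that $\EE[N_n(\ii)]=M^l\prod_jp_j^{k_j}=\exp(S_l(\ii))$ with partial sums $S_k(\ii):=\sum_{j=1}^k\log(Mp_{i_j})$. Enumerating ordered pairs of paths by their longest shared prefix length gives the identity
\[
\frac{\EE[N_n(\ii)^2]}{\EE[N_n(\ii)]^2}=\frac{1}{\EE[N_n(\ii)]}+\frac{M-1}{M}\sum_{k=0}^{l-1}e^{-S_k(\ii)}.
\]
I would then restrict to those words whose partial sums track the linear interpolation between $0$ and $S_l$; namely, set
\[
D_n:=\big\{\ii\in\CL_n:\ |S_k(\ii)-(k/l)S_l(\ii)|\leq L\ \text{for all}\ 0\leq k\leq l\big\},
\]
with $L=L(n)=C_0\sqrt{n\log n}$. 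A uniformly random permutation of a word in $\CL_n$ produces an exchangeable random walk bridge with uniformly bounded increments, so Serfling's Hoeffding-type inequality for sampling without replacement, combined with a union bound over $k$, yields $\#D_n\geq\tfrac{1}{2}\#\CL_n$ for $n$ large.

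For $\ii\in D_n$ one has $e^{-S_k}\leq e^L e^{-(k/l)S_l}$, and a geometric series estimate gives $\sum_{k=0}^{l-1}e^{-S_k(\ii)}\leq C_1e^L\cdot l\cdot\max\{1,e^{-S_l}\}$, after which Paley--Zygmund yields
\[
a_\ii=\PP\big(N_n(\ii)\geq 1\big)\geq\frac{\EE[N_n(\ii)]^2}{\EE[N_n(\ii)^2]}\geq C_2\,l^{-1}e^{-L}\min\{1,\EE[N_n(\ii)]\}.
\]
Summing over $\ii\in D_n$ and substituting $\EE[N_n(\ii)]=e^{n\phi(\xx^*)+o(n)}$ and $\#D_n\geq e^{n\psi(\xx^*)-o(n)}$ gives
\[
\EE[\#\tilde{\Gamma}_n]\geq\sum_{\ii\in D_n}a_\ii\geq e^{n\psi(\xx^*)+n\min\{0,\phi(\xx^*)\}-o(n)}=e^{ns^*-o(n)},
\]
which, upon taking $\log$ and dividing by $n$, yields the claimed inequality.

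The hard part will be the definition and size control of $D_n$: the second-moment computation is only useful once one has a uniform estimate on the entire bridge $(S_k)_{0\leq k\leq l}$, since a few poorly ordered prefixes can inflate $\sum_k e^{-S_k}$ exponentially -- precisely the obstruction flagged in the introductory sketch, namely long runs of a letter $j$ with $Mp_j<1$ making $a_\ii$ exponentially small. The remaining technicalities -- ensuring that enough words in $\CL_n$ actually lie in $\Gamma_n$ rather than merely having the correct product of ratios, and the degenerate case $\phi(\xx^*)=0$ in which the geometric sum degenerates to a linear one -- cost at most subexponential factors and are absorbed into the $o(n)$.
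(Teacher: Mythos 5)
Your proposal is correct, and it takes a genuinely different route from the paper's proof. The paper also restricts attention to a ``well-mixed'' subset $D_n$ of words, but it defines $D_n$ by a combinatorial block condition (partition the word into $m\approx\sqrt{L}$ blocks of length $q$ and require each block to contain exactly $k_i$ copies of letter $i$), and then estimates $a_\ii$ by iterating the branching recursion $a_{i_1\dots i_L}=1-(1-p_{i_1}a_{i_2\dots i_L})^M$ block by block, carefully tracking whether the current value has dropped below a threshold $\eta$. You instead keep the path count $N_n(\ii)$ itself as the object of study, derive the exact overlap identity $\EE[N_n(\ii)^2]/\EE[N_n(\ii)]^2=\EE[N_n(\ii)]^{-1}+\frac{M-1}{M}\sum_{k<l}e^{-S_k(\ii)}$ (which I checked and is correct), and control the overlap sum via the bridge condition $|S_k-(k/l)S_l|\leq L$, handed to you for free by Serfling's maximal concentration inequality for sampling without replacement. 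Both routes require the same key insight -- that the obstruction is words with long homogeneous runs on which the walk $S_k$ dips far below its linear interpolation -- and both $D_n$'s encode this; yours does so more transparently as a direct constraint on the walk, whereas the paper's block condition implies a bridge deviation of $O(\sqrt{n})$ as a side effect. The second-moment route is arguably cleaner and more in line with standard branching random walk methodology, while the paper's recursion is elementary and self-contained and avoids invoking a concentration-for-sampling inequality. Two small points: the adjustment interval should be $[n,n+C]$ rather than $[n-C,n]$, since membership in $\Gamma_n$ requires $-\sum_j k_j\log r_j\geq n$; and the ``reordered so as to lie in $\Gamma_n$'' step is implemented, as in the paper, by fixing the last letter of the representative word, which costs only a polynomial factor in $l$ and also requires Serfling to be applied to the first $l-1$ positions (with the last-letter's multiplicity decremented) -- both are harmless, as you noted.
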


\begin{proof}
Let $\boldsymbol{\beta}=(\beta_1,\dots,\beta_N)$ be any vector in $[0,\infty)^N$ such that $h(\boldsymbol{\beta})=1$. It suffices to show that
\[
\liminf_{n\to\infty}\frac{\log \EE(\#\tilde{\Gamma}_n)}{n}\geq \psi(\boldsymbol{\beta})+\min\{0,\phi(\boldsymbol{\beta})\}.
\]
By continuity, we may assume without loss of generality that $\beta_i>0$ for each $i$.

We divide the proof in a series of steps. For $\ii\in\Gamma_n$, define
\[
a_\ii:=\PP\left(\mbox{at least one path in $\CP_{|\ii|}$ is labeled $\ii$}\right), \qquad \ii\in\{1,2,\dots,N\}^*,
\]
so that
\[
\EE(\#\tilde{\Gamma}_n)=\sum_{\ii\in\Gamma_n} a_\ii.
\]
Unfortunately, the probabilities $a_\ii$ are difficult to compute or even estimate in general. The example at the end of Section \ref{sec:sketch} shows that they can, in general, be exponentially small. Our goal is to construct a subset $D_n$ of $\Gamma_n$ of words in which each digit $i\in\{1,\dots,N\}$ appears approximately $\beta_i n$ times, and then derive a lower estimate for $a_\ii$, $\ii\in D_n$.

\bigskip
{\em Step 1. (Construction of $D_n$)} Since $h(\boldsymbol{\beta})=-\sum \beta_i\log r_i=1$, we have for each $n\in\NN$,
\begin{equation} \label{eq:floor-inequalities}
\prod_{i=1}^N r_i^{1+\lfloor\beta_i n\rfloor}<e^{-n}=\left(\prod_{i=1}^N r_i^{\beta_i}\right)^n\leq \prod_{i=1}^N r_i^{\lfloor\beta_i n\rfloor}.
\end{equation}
Using these inequalities, we first construct a specific word in $\Gamma_n$ in which each digit $i$ occurs approximately $\beta_i n$ times. If equality holds in the rightmost inequality of \eqref{eq:floor-inequalities}, then the word
\[
\mathbf{v}:=1^{\lfloor\beta_1 n\rfloor}2^{\lfloor\beta_2 n\rfloor}\dots N^{\lfloor\beta_N n\rfloor}
\]
lies (barely) in $\Gamma_n$, where $i^k$ denotes the digit $i$ repeated $k$ times. Otherwise, this word is too short to belong to $\Gamma_n$, but by the left-most inequality in \eqref{eq:floor-inequalities}, we can extend $\mathbf{v}$ to a word in $\Gamma_n$ by appending the string $12\dots i_0$, for some $i_0\in\{1,2,\dots,N\}$. In either case, then, there is an integer $i_0\in\{0,1,\dots,N\}$ such that the word
\[
\ww:=1^{\lfloor\beta_1 n\rfloor}2^{\lfloor\beta_2 n\rfloor}\dots N^{\lfloor\beta_N n\rfloor}12\dots i_0.
\]
lies in $\Gamma_n$.
Let $L:=|\ww|=\sum_{i=1}^N\lfloor\beta_i n\rfloor+i_0$, and write $\ww=w_1\dots w_{L}$. Then
\begin{equation} \label{eq:L-bounds}
-N+\sum_{i=1}^N \beta_i n\leq L\leq \sum_{i=1}^N \beta_i n+N.
\end{equation}
Note furthermore, that all permutations of $\ww$ that fix the last digit $w_L$ also belong to $\Gamma_n$. Denote the set of all such permutations by $\hat{D}_n$. It would be easier to estimate the probability $a_\ii$, for $\ii\in\hat{D}_n$, if the occurrences of each digit were somewhat evenly distributed across the word $\ii$. Therefore, we form a subset $D_n$ of $\hat{D}_n$ by imposing an additional requirement.

For each $i\in\{1,\dots,N\}$, let $t_i$ be the number of occurrences of the digit $i$ in $w_1\dots w_{L-1}$. Then $t_i$ is either $\lfloor\beta_i n\rfloor$, $\lfloor\beta_i n\rfloor-1$, or $\lfloor\beta_i n\rfloor+1$, and $\sum_{i=1}^N t_i=L-1$. Put $m:=\lfloor\sqrt{L-1}\rfloor$, and assume $n$ (and hence $L$) is large enough so that $m\gg N$. Put
\[
k_i:=\left\lfloor\frac{t_i}{m}\right\rfloor, \qquad i=1,\dots,N,
\]
and
\[
q:=\sum_{i=1}^N k_i.
\]
Note that $L-1-mN<mq\leq L-1$. Write $L-1=mq+r$, so $0\leq r<mN$. Now let $D_n$ be the set of all words in $\hat{D}_n$ for which each of the first $m$ blocks of length $q$ contains the digit $i$ exactly $k_i$ times, for $i=1,\dots,N$. Precisely, $\ii=i_1\dots i_L\in D_n$ if and only if $\ii\in\hat{D}_n$ and for each $i\in\{1,\dots,N\}$ and $l\in\{1,\dots,m\}$,
\[
\#\{\nu\in\{(l-1)q+1,\dots,lq\}: i_\nu=i\}=k_i.
\]

\bigskip
{\em Step 2. (Estimating the cardinality of $D_n$)} We claim that
\begin{equation} \label{eq:D_n-lower-estimate}
\liminf_{n\to\infty}\frac{\log\#D_n}{n}\geq \psi(\boldsymbol{\beta}).
\end{equation}
To see this, note first that $\psi$ satisfies the positive homogeneity property
\begin{equation} \label{eq:psi-homogeneous}
\psi(\alpha\xx)=\alpha\psi(\xx) \qquad\mbox{for any $\alpha>0$ and $\xx\in[0,\infty)^N$}.
\end{equation}
We use Stirling's approximation to get the initial estimate
\begin{align*}
\#D_n\geq {\binom{q}{k_1,\dots,k_N}}^m&=\left(\frac{\sqrt{2\pi q}\cdot q^q}{\prod\sqrt{2\pi k_i}\cdot \prod k_i^{k_i}}
\cdot (1+o(1))\right)^m\\
& \geq \left(\frac{q^q}{(2\pi)^{N/2}q^{N/2}\prod k_i^{k_i}}\cdot (1+o(1))\right)^m.
\end{align*}
Thus,
\begin{align*}
\frac{\log\#D_n}{m}&\geq q\log q-\frac{N}{2}\log 2\pi-\frac{N}{2}\log q-\sum_i k_i\log k_i+o(1)\\
&=q\log q-\sum_i k_i\log k_i+O(\log q). 
\end{align*}
Here we want to replace $k_i$ with $k_i':=\beta_i n/m$ (which is not necessarily an integer), and similarly, replace $q$ with $q':=\sum_i k_i'$.
The definitions of $t_i$ and $k_i$ imply
\begin{equation} \label{eq:k-i-estimates}
\frac{\beta_i n-2}{m}-1<\frac{t_i}{m}-1<k_i\leq\frac{t_i}{m}\leq \frac{\beta_i n+1}{m},
\end{equation}
so certainly $|k_i-k_i'|\leq 3$. The mean value theorem then gives $k_i\log k_i-k_i'\log k_i'=O(\log k_i)=O(\log q)$. Similarly, $|q-q'|\leq 3N$ and so $q\log q-q'\log q'=O(\log q)$.
Observe that 
\[
q\leq \frac{L}{m}\sim \sqrt{L}\sim \sqrt{\sum \beta_i n}=O(\sqrt{n})
\]
by \eqref{eq:L-bounds}. 
Hence,
\begin{align*}
\frac{\log\#D_n}{m}&\geq q'\log q'-\sum_i k_i'\log k_i'+O(\log q)\\
&=\psi\left(\frac{\beta_1 n}{m},\dots,\frac{\beta_N n}{m}\right)+O(\log \sqrt{n})\\
&=\frac{n}{m}\psi(\boldsymbol{\beta})+O(\log n),
\end{align*}
where in the last step we used \eqref{eq:psi-homogeneous}. Multiplying by $m/n$, we obtain \eqref{eq:D_n-lower-estimate}.

\bigskip
{\em Step 3. (Estimating $a_\ii$ for $\ii\in D_n$)} Recall from Step 1 that all words in $D_n$ have length $L$. Note that $a_\ii$ satisfies the recursion
\begin{equation} \label{eq:recursion}
a_{i_1i_2\dots i_L}=1-(1-p_{i_1}a_{i_2\dots i_L})^M.
\end{equation}
This can be seen by a branching argument: Let $G_\ii$ be the event that {\em no} path of length $n$ starting at the root of the  tree has label sequence $\mathbf{i}$. For $k=1,\dots,M$, let $G_{\ii,k}$ be the event that no path starting with the $k$th edge emanating from the root has label sequence $\mathbf{i}$. By independence, $\PP(G_\ii)=\prod_{k=1}^M \PP(G_{\ii,k})$. Now $G_{\ii,k}^c$ happens if and only if the $k$th edge below the root receives label $i_1$ and the label sequence $a_{i_2\dots i_L}$ occurs at least once in the tree starting from the $k$th child of the root. By the stationarity of the labeling process, the latter event has probability $a_{i_2\dots i_L}$. Thus,
\[
a_\ii=1-\PP(G_\ii)=1-\prod_{k=1}^M\big(1-\PP(G_{\ii,k}^c)\big)=1-(1-p_{i_1}a_{i_2\dots i_L})^M.
\]

Let
\[
p_{\min}:=\min_i p_i, \qquad\mbox{and} \qquad p_{\max}:=\max_i p_i.
\]
We assume that $p_{\min}>0$. This is not a real restriction: If $p_i=0$ for some $i$, we simply remove the map $f_i$ from the IFS and reduce the value of $N$. 
 Since the function $x\mapsto 1-(1-p_{\min}x)^M$ is increasing and concave on $[0,1]$, it follows from \eqref{eq:recursion} that
\begin{equation} \label{eq:lower-linear}
a_{i_1i_2\dots i_L}\geq \gamma a_{i_2\dots i_L} \qquad\forall i_1,\dots,i_L,
\end{equation}
where $\gamma:=1-(1-p_{\min})^M>0$ is a universal constant. Furthermore, it follows from \eqref{eq:recursion} and Bernoulli's inequality that
\begin{equation} \label{eq:upper-linear}
a_{i_1i_2\dots i_L}\leq Mp_{i_1}a_{i_2\dots i_L}
\end{equation}
and
\begin{align} 
\begin{split}
a_{i_1i_2\dots i_L}&\geq Mp_{i_1}a_{i_2\dots i_L}-\binom{M}{2}p_{i_1}^2 a_{i_2\dots i_L}^2\\
&\geq Mp_{i_1}a_{i_2\dots i_L}(1-Ma_{i_2\dots i_L}).
\end{split}
\label{eq:quadratic-lower}
\end{align}
Recall that $L=mq+r+1$, where $m=\lfloor\sqrt{L-1}\rfloor$, $q=\sum_i k_i$ and $0\leq r<mN$. Our goal is to show that the probability $a_{i_j\dots i_L}$ cannot oscillate too much as $j$ moves backwards from $L$ to $1$. In view of the definition of $D_n$, we consider decrementing $j$ in steps of size $q$. Precisely, we always take $j$ to be one more than a multiple of $q$. To shorten notation, put
\[
b_l(\ii):=a_{i_{lq+1}\dots i_L}, \qquad \ii\in D_n, \quad l=0,1,\dots,m.
\]
First, \eqref{eq:lower-linear} implies
\begin{equation} \label{eq:remainder-block}
b_m(\ii)\geq \gamma^{L-mq}=\gamma^{r+1}\geq \gamma^{mN}.
\end{equation} 
Fix $\eta>0$. Take $l\in\{1,\dots,m\}$. We consider two cases.
If
\[
b_l(\ii)>\eta(Mp_{\max})^{-q},
\]
then by \eqref{eq:lower-linear},
\[
b_{l-1}(\ii)\geq \gamma^q b_l(\ii)>\eta\left(\frac{\gamma}{Mp_{\max}}\right)^q.
\]
Suppose instead that
\begin{equation} \label{eq:small-intermediate}
b_l(\ii)\leq\eta(Mp_{\max})^{-q}.
\end{equation}
Then $a_{i_j\dots i_L}\leq\eta$ for $(l-1)q<j\leq lq$ by \eqref{eq:upper-linear}, and so \eqref{eq:quadratic-lower} implies
\begin{equation} \label{eq:block-lower-inequality}
b_{l-1}(\ii)\geq \prod_{i=1}^N \big(Mp_i(1-\eta M)\big)^{k_i}b_l(\ii)
=\big(M(1-\eta M)\big)^q\left(\prod_{i=1}^N p_i^{k_i}\right)b_l(\ii).
\end{equation}
At this point it is convenient to define (as in the proof of the upper bound)
\[
\alpha_i:=\frac{\beta_i}{\beta_1+\dots+\beta_N}, \qquad i=1,\dots,N.
\]
From the left half of \eqref{eq:k-i-estimates} it follows that
\[
\frac{\sum_i\beta_i n}{m}<q+N+\frac{N}{m}.
\]
Hence, by the right half of \eqref{eq:k-i-estimates},
\[
k_i\leq \frac{\beta_i n+1}{m}=\frac{\alpha_i\sum_j\beta_j n+1}{m}<\alpha_i\left(q+N+\frac{N}{m}\right)+\frac{1}{m}<\alpha_i(q+2N)+1,
\]
so that
\[
\prod_{i=1}^N p_i^{k_i}\geq \prod_{i=1}^N p_i^{\alpha_i(q+2N)+1}\geq \prod_{i=1}^N p_i^{\alpha_i q}\cdot \prod_{i=1}^N p_{\min}^{2N\alpha_i+1}=p_{\min}^{3N}\prod_{i=1}^N p_i^{\alpha_i q}=:C_{\pp}\prod_{i=1}^N p_i^{\alpha_i q}.
\]
Putting this estimate back into \eqref{eq:block-lower-inequality} yields
\[
b_{l-1}(\ii)\geq C_{\pp}\big(M(1-\eta M)\big)^q \left(\prod_{i=1}^N p_i^{\alpha_i q}\right)b_l(\ii)
\geq C_{\pp}(1-\eta M)^q K_{\vec\alpha}^q\, b_l(\ii),
\]
where
\[
K_{\vec\alpha}:=\min\left\{1,M\prod_{i=1}^N p_i^{\alpha_i}\right\}=\min\left\{1,\prod_{i=1}^N (Mp_i)^{\alpha_i}\right\}.
\]
(We do the estimate this way, because it will later be important that $K_{\vec\alpha}\leq 1$.) Now let
\[
u(\ii):=\#\{l\in\{1,\dots,m\}:\ \mbox{\eqref{eq:small-intermediate} holds}\}.
\]
Then for each $\ii\in D_n$ we obtain, using that $u(\ii)\leq m$ and $u(\ii)q\leq mq\leq L$,
\begin{align*}
a_\ii&\geq\eta\left(\frac{\gamma}{Mp_{\max}}\right)^q(1-\eta M)^{u(\ii)q}C_{\pp}^{u(\ii)} K_{\vec\alpha}^{u(\ii)q}\, b_m(\ii)\\
&\geq \eta\left(\frac{\gamma}{Mp_{\max}}\right)^q(1-\eta M)^{mq}C_{\pp}^m K_{\vec\alpha}^{mq}\gamma^{mN}\\
&\geq \eta C_{\pp}^m\gamma^{mN}\left(\frac{\gamma}{Mp_{\max}}\right)^q(1-\eta M)^L K_{\vec\alpha}^L, 
\end{align*}
where we also used \eqref{eq:remainder-block}. Note that this bound does not depend on $\ii$. Since $m\leq \sqrt{L}$ and $q\leq L/m\sim\sqrt{L}$, the factors $C_{\pp}^m\gamma^{mN}$ and $\left(\frac{\gamma}{Mp_{\max}}\right)^q$ go to zero sub-exponentially (if at all), hence for all large enough $n$, we have
\begin{equation*} 
\min_{\ii\in D_n}a_\ii\geq (1-\eta M)^L (K_{\vec\alpha}-\eta)^L\geq \big\{(1-\eta M)(K_{\vec\alpha}-\eta)\big\}^{\sum_i\beta_i n+N},
\end{equation*}
by \eqref{eq:L-bounds}. Since $\eta$ was arbitrary and $N$ is constant, it follows that
\begin{equation} \label{eq:a_i-estimate}
\liminf_{n\to\infty}\frac{1}{n}\log\left(\min_{\ii\in D_n}a_\ii\right)\geq \sum_i\beta_i \log K_{\vec\alpha}.
\end{equation}

\medskip
{\em Step 4. (Conclusion)} Clearly, we have
\[
\sum_{\ii\in\Gamma_n}a_\ii\geq \sum_{\ii\in D_n}a_\ii \geq \#D_n\cdot \min_{\ii\in D_n}a_\ii.
\]
Thus, by \eqref{eq:D_n-lower-estimate} and \eqref{eq:a_i-estimate},
\begin{align*}
\liminf_{n\to\infty} \frac{1}{n}\log\left(\sum_{\ii\in\Gamma_n}a_\ii\right)&\geq \liminf_{n\to\infty} \frac{\log\#D_n}{n}+\liminf_{n\to\infty} \frac{\log \min_{\ii\in D_n}a_\ii}{n}\\
&\geq \psi(\boldsymbol{\beta})+\sum_{i=1}^N \beta_i\log K_{\vec\alpha}.
\end{align*}
Recalling that $\alpha_i=\beta_i/(\beta_1+\dots+\beta_N)$ and noting that
\begin{align*}
\sum_{i=1}^N \beta_i\log K_{\vec\alpha}&=\sum_{i=1}^N\beta_i\cdot\min\left\{0,\sum_i \alpha_i\log(Mp_i)\right\}\\
&=\min\left\{0,\sum_i\beta_i\log(Mp_i)\right\}=\min\big\{0,\phi(\boldsymbol{\beta})\big\},
\end{align*}
we complete the proof.
\end{proof}

\section{Solving the constrained optimization} \label{sec:main-proof}

We first establish the existence and uniqueness of the pair $(\hat{s},\hat{t})$ in Theorem \ref{thm:main} (iii).

\begin{lemma} \label{lem:s-t-existence}
Assume
\begin{equation} \label{eq:main-case-re}
\sum_{i=1}^N r_i^{s_0}\log(Mp_i)<0<\sum_{i=1}^N p_i r_i^{\tilde{s}}\log(Mp_i).
\end{equation}
Then there is a unique pair $(\hat{s},\hat{t})$ of real numbers, with $\hat{s}>0$, satisfying the equations
\begin{gather} 
\sum_{i=1}^N r_i^{\hat{s}}(Mp_i)^{\hat{t}}=1, \label{eq:s-t-equation1} \\
\sum_{i=1}^N r_i^{\hat{s}}(Mp_i)^{\hat{t}}\log(Mp_i)=0.  \label{eq:s-t-equation2}
\end{gather}
\end{lemma}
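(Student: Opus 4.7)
The plan is to rewrite the two-equation system as a critical-point condition on a single smooth curve. Define
\[
g(s,t) := \sum_{i=1}^N r_i^s (Mp_i)^t.
\]
For each fixed $t$, the function $g(\cdot,t)$ is smooth, strictly decreasing (as $g_s(s,t) = \sum_i r_i^s(Mp_i)^t \log r_i < 0$), and maps $\RR$ onto $(0,\infty)$, so there is a unique $s(t) \in \RR$ with $g(s(t),t)=1$. The implicit function theorem makes $t \mapsto s(t)$ of class $C^\infty$ with $s'(t) = -g_t(s(t),t)/g_s(s(t),t)$. Equations \eqref{eq:Moran} and \eqref{eq:weighted-Moran} give $s(0)=s_0$ and $s(1)=\tilde s$, and the system \eqref{eq:s-t-equation1}--\eqref{eq:s-t-equation2} becomes the single condition $s'(\hat t) = 0$, with $\hat s := s(\hat t)$.

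For existence, the two strict inequalities in \eqref{eq:main-case-re} translate into sign information about $g_t$: the first gives $g_t(s_0,0) < 0$, and the second gives $g_t(\tilde s, 1) = M\sum p_i r_i^{\tilde s}\log(Mp_i) > 0$. Since $g_s < 0$, this yields $s'(0) < 0 < s'(1)$, so the intermediate value theorem produces $\hat t \in (0,1)$ with $s'(\hat t) = 0$.

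For uniqueness I will show $s$ is strictly convex. Differentiating $g(s(t),t)=1$ twice in $t$ gives
\[
s''(t) = -\frac{1}{g_s(s(t),t)} \sum_{i=1}^N r_i^{s(t)}(Mp_i)^t\bigl(s'(t)\log r_i + \log(Mp_i)\bigr)^2 \geq 0.
\]
An equality $s''(t_0)=0$ at some $t_0$ forces $\log(Mp_i)/\log r_i = -s'(t_0)$ to be the same constant $-a$ for every $i$, i.e.\ $Mp_i = r_i^a$. Then $g(s,t) = \sum r_i^{s+at}$ collapses to the condition $s+at = s_0$, so $s(t) = s_0 - at$ is linear and $g_t = a\, g_s$ is of one sign throughout, contradicting $g_t(s_0,0)<0<g_t(\tilde s, 1)$. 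Therefore $s''>0$ everywhere, $s'$ is strictly increasing, and the critical point $\hat t$ (and hence $\hat s$) is unique.

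Finally, for $\hat s > 0$, set $S := -\bigl(\sum_i r_i^{\hat s}(Mp_i)^{\hat t}\log r_i\bigr)^{-1} > 0$ and $x_i^* := S\, r_i^{\hat s}(Mp_i)^{\hat t}$. Then each $x_i^* > 0$ (since $p_i > 0$), $\sum_i x_i^* = S$ by \eqref{eq:s-t-equation1}, $h(\xx^*)=1$ by choice of $S$, and $\phi(\xx^*)=0$ by \eqref{eq:s-t-equation2}. Substituting $\log x_i^* = \log S + \hat s\log r_i + \hat t\log(Mp_i)$ into the definition of $\psi$ yields $\psi(\xx^*) = \hat s\, h(\xx^*) - \hat t\,\phi(\xx^*) = \hat s$. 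On the other hand $\psi(\xx^*)/S$ equals the Shannon entropy of the probability vector $(x_i^*/S)_{i=1}^N$, which is strictly positive as $N \geq 2$ and every coordinate is positive; hence $\hat s > 0$. The main conceptual hurdle is ruling out the degenerate case in the strict convexity step, where both strict inequalities in \eqref{eq:main-case-re} are needed; everything else is bookkeeping around the implicit function theorem.
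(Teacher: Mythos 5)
Your proof is correct and follows the same overall approach as the paper: solve $g(s,t)=1$ for $s=s(t)$, translate the system into the critical-point condition $s'(\hat t)=0$, use the two inequalities in \eqref{eq:main-case-re} to get $s'(0)<0<s'(1)$ for existence, and use strict convexity of $s(\cdot)$ (ruling out the degenerate case $Mp_i=r_i^a$, which would make $g_t$ of one sign) for uniqueness. The one genuine variation is your argument for $\hat s>0$: the paper observes that the second inequality forces $Mp_{i_0}>1$ for some $i_0$, hence $\sum_i(Mp_i)^{\hat t}>1$ since $\hat t>0$, so by monotonicity of $s\mapsto G_1(s,\hat t)$ the root lies at $\hat s>0$; you instead exhibit $\hat s=\psi(\xx^*)=S\,H(\alpha^*)$ as a positive multiple of the Shannon entropy of a nondegenerate probability vector. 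Both arguments work; yours has the minor advantage that it does not use $\hat t>0$ but only that $(\hat s,\hat t)$ satisfies \eqref{eq:s-t-equation1}--\eqref{eq:s-t-equation2}.
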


\begin{proof}
Define the functions
\[
G_1(s,t):=\sum_{i=1}^N r_i^s (Mp_i)^t, \qquad G_2(s,t):=\sum_{i=1}^N r_i^s (Mp_i)^t\log(Mp_i).
\]
Note that for fixed $t$, $G_1(s,t)$ is strictly decreasing in $s$, with $\lim_{s\to\infty}G_1(s,t)=0$ and $\lim_{s\to-\infty}G_1(s,t)=\infty$. Thus, for fixed $t$, the equation $G_1(s,t)=1$ has a unique solution $s=g(t)$, say.  
Differentiating implicitly yields
\begin{equation} \label{eq:first-derivative}
g'(t)=-\frac{\sum_i r_i^s(Mp_i)^t\log(Mp_i)}{\sum_i r_i^s(Mp_i)^t\log r_i}.
\end{equation}
This shows that $G_2(s,t)=0$ precisely at those points $(s,t)$ where $s=g(t)$ and $g'(t)=0$. In other words, the curves
\[
C_1: G_1(s,t)=1 \qquad\mbox{and} \qquad C_2: G_2(s,t)=0
\]
intersect each other at those and only those points where $C_1$ has a horizontal tangent line.

Next, we rewrite \eqref{eq:first-derivative} as
\[
0=\sum_{i=1}^N r_i^{g(t)}(Mp_i)^t\{g^\prime(t)\log r_i+\log(Mp_i)\}.
\]
Differentiating implicitly once more, we find
\[
0=\sum_{i=1}^N r_i^{g(t)}(Mp_i)^t\left[g^{\prime \prime}(t)\log r_i+\{g^\prime(t)\log r_i+\log(Mp_i)\}^2\right].
\]
(These equations are well known in the multifractal analysis of self-similar sets.) It follows that $g''(t)\geq 0$, and so $g$ is convex. In fact, it is strictly convex unless $Mp_i=r_i^\lambda$ for all $i$ and some constant $\lambda$. But we can rule out the latter possibility because this would imply that $\log(Mp_i)$ has constant sign, contradicting \eqref{eq:main-case}. Hence, there is at most one point where $g'(t)=0$, and so $C_1$ and $C_2$ intersect at most once.

In order to show that $C_1$ and $C_2$ do intersect, we need to show that $g'(t)=0$ for some $t$. Observe that the equation $\sum_i p_i r_i^{\tilde s}=1/M$ is equivalent to $G_1(\tilde{s},1)=1$, so $g(1)=\tilde{s}$, and hence
\[
\sum_i r_i^{g(1)}(Mp_i)^1\log(Mp_i)=\sum_i Mp_i r_i^{\tilde s}\log(Mp_i)>0
\]
by the second inequality in \eqref{eq:main-case}. This implies $g'(1)>0$ by \eqref{eq:first-derivative}. On the other hand, the equation $\sum_i r_i^{s_0}=1$ is equivalent to $G_1(s_0,0)=1$, so $g(0)=s_0$ and hence,
\[
\sum_i r_i^{g(0)}(Mp_i)^0\log(Mp_i)=\sum_i r_i^{s_0}\log(Mp_i)<0
\]
by the first inequality in \eqref{eq:main-case}. This implies $g'(0)<0$. Therefore, there exists $\hat{t}\in(0,1)$ such that $g'(\hat{t})=0$, as required. 

It remains to verify that $g(\hat{t})>0$. By the second inequality in \eqref{eq:main-case}, $Mp_{i_0}>1$ for at least one $i_0$, and since $\hat{t}>0$, we have $(Mp_{i_0})^{\hat t}>1$.  By the definition of $g$, we thus have
\[
\sum_i r_i^{g(\hat{t})}(Mp_i)^{\hat t}=1<\sum_i(Mp_i)^{\hat t},
\]
and this implies $g(\hat{t})>0$.
\end{proof}

\begin{proof}[Proof of Theorem \ref{thm:main}]
(i) Assume 
\begin{equation} \label{eq:M-large-re}
\sum_{i=1}^N r_i^{s_0}\log(Mp_i)\geq 0,
\end{equation}
and recall that 
\begin{gather}
\psi(x_1,\dots,x_N):=\sum_{i=1}^N x_i\left(\log\sum_{j=1}^N x_j-\log x_i\right), \\
\phi(x_1,\dots,x_N):=\sum_{i=1}^N x_i\log(Mp_i), \\
h(x_1,\dots,x_N):=-\sum_{i=1}^N x_i\log r_i.
\end{gather}
Put $\boldsymbol\beta=(\beta_1,\dots,\beta_N)$, where
\begin{equation} \label{eq:case-1-optimal-beta}
\beta_i:=-\frac{r_i^s}{\sum_j r_j^s\log r_j}, \qquad i=1,\dots,N.
\end{equation}
It is immediate that $h(\boldsymbol{\beta})=1$, while \eqref{eq:M-large-re} implies $\phi(\boldsymbol{\beta})=\sum_i\beta_i\log(Mp_i)\geq 0$. Furthermore,
\[
\psi(r_1^{s_0},\dots,r_N^{s_0})=-s_0\sum_i r_i^{s_0}\log r_i,
\]
since $\sum_i r_i^{s_0}=1$. Thus, since $\psi(\alpha\xx)=\alpha\psi(\xx)$ for all $\alpha>0$ and $\xx\in[0,\infty)^N$ (cf. \eqref{eq:psi-homogeneous}),
\[
\psi(\boldsymbol{\beta})=-\frac{1}{\sum_j r_j^{s_0}\log r_j}\psi(r_1^{s_0},\dots,r_N^{s_0})=s_0.
\]
Applying Theorem \ref{thm:variational} we conclude $\dim_H F\geq s_0$ almost surely. Since $F \subset \Lambda$, the reverse inequality holds trivially. Thus, $\dim_H F=s_0$ almost surely.


(ii) Next, assume 
\begin{equation} \label{eq:M-small-re}
\sum_{i=1}^N p_i r_i^{\tilde{s}}\log(Mp_i)\leq 0,
\end{equation}
where $\tilde{s}$ satisfies $\sum_i p_i r_i^{\tilde s}=1/M$. Put
\begin{equation} \label{eq:case-2-optimal-beta}
\beta_i':=-\frac{p_i r_i^{\tilde{s}}}{\sum_j p_j r_j^{\tilde s}\log r_j}, \qquad i=1,\dots,N,
\end{equation}
and $\boldsymbol{\beta}':=(\beta_1',\dots,\beta_N')$. Clearly $h(\boldsymbol{\beta}')=1$, and \eqref{eq:M-small-re} implies $\phi(\boldsymbol{\beta}')\leq 0$. Note that
\begin{align*}
\psi\big(Mp_1 r_1^{\tilde s},\dots, Mp_N r_N^{\tilde s}\big)&=-\sum_i Mp_i r_i^{\tilde s}\log(Mp_i r_i^{\tilde s})\\
&=-\sum_i Mp_i r_i^{\tilde s}\big(\log(Mp_i)+\tilde{s}\log r_i\big).
\end{align*}
So by \eqref{eq:psi-homogeneous},
\[
\dim_H F\geq \psi(\boldsymbol{\beta}')+\phi(\boldsymbol{\beta}')=\frac{\sum_i Mp_i r_i^{\tilde s}\big(\log(Mp_i)+\tilde{s}\log r_i\big)}{\sum_j Mp_j r_j^{\tilde s}\log r_j}-\frac{\sum_i p_i r_i^{\tilde s}\log(Mp_i)}{\sum_j p_j r_j^{\tilde s}\log r_j}=\tilde{s}.
\]
The reverse inequality follows since the vector $\boldsymbol{\beta}'$ defined by \eqref{eq:case-2-optimal-beta} maximizes $\psi(\xx)+\phi(\xx)$ subject to the constraint $h(\xx)=1$, as is easily seen by a Lagrange multiplier computation.
Hence, $\dim_H F=\tilde{s}\leq s_0$ almost surely.

Finally, we show that $\tilde{s}=s_0$ if and only if $N=M$ and $p_i=1/N$ for all $i$. Let $\boldsymbol{\beta}$ again be the vector defined by \eqref{eq:case-1-optimal-beta}. If $\boldsymbol{\beta}\neq\boldsymbol{\beta}'$, then 
\[
s_0=\psi(\boldsymbol{\beta})>\psi(\boldsymbol{\beta}')\geq \psi(\boldsymbol{\beta}')+\phi(\boldsymbol{\beta}')=\tilde{s},
\]
since $\boldsymbol{\beta}$ is the unique maximizer of $\psi(\xx)$ subject to $h(\xx)=1$. On the other hand, if $\boldsymbol{\beta}=\boldsymbol{\beta}'$, then it follows from equating the right hand sides of \eqref{eq:case-1-optimal-beta} and \eqref{eq:case-2-optimal-beta} that $p_i$ is constant in $i$, and so $p_i=1/N$ for $i=1,\dots,N$. Hence by \eqref{eq:weighted-Moran}, $\sum_i r_i^{\tilde{s}}=N/M$. So if $\tilde{s}=s_0$, it must be the case that $N=M$ and $p_i=1/N$ for all $i$. The reverse implication is obvious.

(iii) Assume \eqref{eq:main-case-re}. The existence and uniqueness of $(\hat{s},\hat{t})$ were established in Lemma \ref{lem:s-t-existence}. Let $\boldsymbol{\beta}$ and $\boldsymbol{\beta}'$ be the vectors from \eqref{eq:case-1-optimal-beta} and \eqref{eq:case-2-optimal-beta}, respectively. Since $\psi$ is unimodal on the hyperplane $h(\xx)=1$ with its constrained maximum at $\boldsymbol{\beta}$ and $\phi(\boldsymbol{\beta})<0$ by the first inequality in \eqref{eq:main-case}, it follows that 
\begin{align*}
\max\big\{\psi(\xx)&+\min\{0,\phi(\xx)\}: \xx\in[0,\infty)^N, h(\xx)=1, \phi(\xx)\geq 0\big\}\\
&=\max\big\{\psi(\xx): \xx\in[0,\infty)^N, h(\xx)=1, \phi(\xx)\geq 0\big\}\\
&=\max\big\{\psi(\xx): \xx\in[0,\infty)^N, h(\xx)=1, \phi(\xx)=0\big\}\\
&=\max\big\{\psi(\xx)+\min\{0,\phi(\xx)\}: \xx\in[0,\infty)^N, h(\xx)=1, \phi(\xx)=0\big\}.
\end{align*}
Likewise, since $\psi+\phi$ is unimodal on $\{\xx:h(\xx)=1\}$ with its constrained maximum at $\boldsymbol{\beta}'$ and $\phi(\boldsymbol{\beta}')>0$ by the second inequality in \eqref{eq:main-case}, we have
\begin{align*}
\max\big\{\psi(\xx)&+\min\{0,\phi(\xx)\}: \xx\in[0,\infty)^N, h(\xx)=1, \phi(\xx)\leq 0\big\}\\
&=\max\big\{\psi(\xx)+\phi(\xx): \xx\in[0,\infty)^N, h(\xx)=1, \phi(\xx)\leq 0\big\}\\
&=\max\big\{\psi(\xx)+\phi(\xx): \xx\in[0,\infty)^N, h(\xx)=1, \phi(\xx)=0\big\}\\
&=\max\big\{\psi(\xx)+\min\{0,\phi(\xx)\}: \xx\in[0,\infty)^N, h(\xx)=1, \phi(\xx)=0\big\}.
\end{align*}
Thus, the constrained maximum of $\psi(\xx)+\min\{0,\phi(\xx)\}$ over $\{\xx:h(\xx)=1\}$ must be attained on the hyperplane $\phi(\xx)=0$. Solving the Lagrange multiplier problem 
\[
\mbox{maximize}\ \psi(\xx)\qquad \mbox{subject to}\qquad h(\xx)=1, \quad \phi(\xx)=0
\]
yields that the maximum is attained (uniquely) at the point $\hat{\boldsymbol\beta}=(\hat{\beta}_1,\dots,\hat{\beta}_N)$ given by
\[
\hat{\beta}_i=-\frac{r_i^{\hat s}(Mp_i)^{\hat t}}{\sum_j r_j^{\hat s}(Mp_j)^{\hat t}\log r_j}, \qquad i=1,\dots,N.
\]
We then have
\begin{align*}
\dim_H F&=\psi\big(\hat{\boldsymbol\beta}\big)=-\frac{\psi\big(r_1^{\hat s}(Mp_1)^{\hat t},\dots,r_N^{\hat s}(Mp_N)^{\hat t}\big)}{\sum_i r_i^{\hat s}(Mp_i)^{\hat t}\log r_i}\\
&=\frac{\sum_i r_i^{\hat s}(Mp_i)^{\hat t}\log\big(r_i^{\hat s}(Mp_i)^{\hat t}\big)}{\sum_i r_i^{\hat s}(Mp_i)^{\hat t}\log r_i}\\
&=\frac{\sum_i r_i^{\hat s}(Mp_i)^{\hat t}\big\{\hat{s}\log r_i+\hat{t}\log(Mp_i)\big\}}{\sum_i r_i^{\hat s}(Mp_i)^{\hat t}\log r_i}=\hat{s},
\end{align*}
where the first equality used \eqref{eq:psi-homogeneous} and \eqref{eq:s-t-equation1}, and the last inequality follows from \eqref{eq:s-t-equation2}. Finally, that $\hat{s}<s_0$ follows since $\hat{\boldsymbol\beta}\neq\boldsymbol{\beta}$ (because $\phi(\hat{\boldsymbol\beta})=0\neq\phi(\boldsymbol{\beta})$), and so $\hat{s}=\psi(\hat{\boldsymbol\beta})<\psi(\boldsymbol{\beta})=s_0$.
\end{proof}

\section{Proofs of the special cases} \label{sec:corollaries}

\begin{proof}[Proof of Corollary \ref{cor:homogeneous}]
Suppose $r_i=r$ for $i=1,\dots,N$. Then \eqref{eq:Moran} and \eqref{eq:weighted-Moran} give
\[
s_0=-\frac{\log N}{\log r}, \qquad \tilde{s}=-\frac{\log M}{\log r}.
\]
Furthermore, the conditions \eqref{eq:M-large-condition} and \eqref{eq:M-small-condition} simplify to $M\geq U(N,\pp)$ and $M\leq L(N,\pp)$, respectively. Finally, for $L(N,\pp)\leq M\leq U(N,\pp)$, \eqref{eq:s-t-equation-a} becomes
\begin{equation} \label{eq:simpler-s-t-equation-a}
r^{\hat s}\sum_i (Mp_i)^{\hat t}=1,
\end{equation}
and \eqref{eq:s-t-equation-b} reduces to
\[
\sum_i p_i^{\hat t}\log(Mp_i)=0.
\]
Putting $\lambda:=\hat{t}$, we see that $\lambda$ satisfies \eqref{eq:mu-equation}, and then \eqref{eq:simpler-s-t-equation-a} gives the desired expression for $\dim_H F$.
\end{proof}

\begin{proof}[Proof of Theorem \ref{cor:two-maps-homogeneous}]
In principle, it is possible (but cumbersome!) to derive Theorem \ref{cor:two-maps-homogeneous} directly from Corollary \ref{cor:homogeneous}. It is simpler, however, to deduce it from Theorem \ref{thm:variational}. For $N=M=2$, we are always in the case $L(N,\pp)\leq M\leq U(N,\pp)$, so the constrained maximum in Theorem \ref{thm:variational} is attained on the line 
\[
\phi(x_1,x_2)=x_1\log(2p)+x_2\log(2(1-p))=0.
\]
Together with the equation $h(x_1,x_2)=-(x_1+x_2)\log r=1$, this gives a unique point $(x_1,x_2)$ which satisfies
\[
\frac{x_1}{x_1+x_2}=-\frac{\log(2(1-p))}{\log p-\log(1-p)}=1-\xi, \qquad \frac{x_2}{x_1+x_2}=\frac{\log(2p)}{\log p-\log(1-p)}=\xi.
\]
Thus,
\begin{align*}
\dim_H F&=\psi(x_1,x_2)=(x_1+x_2)\psi\left(\frac{x_1}{x_1+x_2},\frac{x_2}{x_1+x_2}\right)\\
&=-\frac{\psi(1-\xi,\xi)}{\log r}=\frac{\xi\log\xi+(1-\xi)\log(1-\xi)}{\log r},
\end{align*}
as desired.
\end{proof}

The proof of Theorem \ref{cor:two-maps-nonhomogeneous} is more subtle; we first develop a few lemmas. Recall that $\tilde{s}=\tilde{s}(p)$ is the solution of the equation
\begin{equation} \label{eq:s-tilde-equation-bis}
pr_1^{\tilde s}+(1-p)r_2^{\tilde s}=\frac12.
\end{equation}

\begin{lemma} \label{lem:negative-difference}
Assume $r_1<r_2$ and $1/2\leq p<1$. Then
\[
r_1^{\tilde s}\log r_1<r_2^{\tilde s}\log r_2.
\]
\end{lemma}

\begin{proof}
Note that the inequality is not trivial because the function $r\mapsto r^{\tilde s}\log r$ is not monotone on $[0,1]$. However, the inequality is equivalent to
\[
\left(\frac{r_1}{r_2}\right)^{\tilde s}> \frac{\log r_2}{\log r_1},
\]
and since $\tilde{s}\leq s_0$, it suffices therefore to show that
\begin{equation} \label{eq:s_0-inequality}
r_1^{s_0}\log r_1<r_2^{s_0}\log r_2.
\end{equation}
But observe that
\[
r_1^{s_0}\log r_1-r_2^{s_0}\log r_2=\frac{1}{s_0}\left[r_1^{s_0}\log r_1^{s_0}-r_2^{s_0}\log r_2^{s_0}\right]=\frac{1}{s_0}f(r_1^{s_0}),
\]
where $f(x):=x\log x-(1-x)\log(1-x)$, and it is easy to see that $f$ is convex on $[0,1/2]$ with $f(0)=f(1/2)=0$, hence $f(x)\leq 0$ on $[0,1/2]$. Since $r_1<r_2$ and $r_1^{s_0}+r_2^{s_0}=1$, it follows that $r_1^{s_0}<1/2$. Hence, we have \eqref{eq:s_0-inequality}.
\end{proof}

\begin{lemma} \label{lem:convex}
Let $r_1<r_2$. Then the function $\tilde{s}(p)$ is strictly decreasing and strictly convex on $1/2\leq p\leq 1$.
\end{lemma}

\begin{proof}
Differentiating \eqref{eq:s-tilde-equation-bis} implicitly with respect to $p$ gives
\begin{equation} \label{eq:derivative-of-s-tilde}
\left\{pr_1^{\tilde s}\log r_1+(1-p)r_2^{\tilde s}\log r_2\right\}\frac{d\tilde{s}}{dp}+(r_1^{\tilde s}-r_2^{\tilde s})=0,
\end{equation}
which shows that $d\tilde{s}/dp<0$. Differentiating a second time results in
\begin{align}
\begin{split}
&\left\{pr_1^{\tilde s}\log r_1+(1-p)r_2^{\tilde s}\log r_2\right\}\frac{d^2\tilde{s}}{dp^2}\\
&\qquad +\left\{pr_1^{\tilde s}(\log r_1)^2+(1-p)r_2^{\tilde s}(\log r_2)^2\right\}\left(\frac{d\tilde{s}}{dp}\right)^2\\
&\qquad +2\left(r_1^{\tilde s}\log r_1-r_2^{\tilde s}\log r_2\right)\frac{d\tilde{s}}{dp}=0.
\end{split}
\label{eq:second-derivative-of-s-tilde}
\end{align}
Here the second term is clearly positive. The third term is also positive by Lemma \ref{lem:negative-difference} and $d\tilde{s}/dp<0$. Therefore, the first term must be negative, and this implies $d^2\tilde{s}/dp^2>0$.
\end{proof}

\begin{lemma} \label{lem:g-tilde}
Define the function
\begin{equation} \label{eq:g-tilde}
\tilde{g}(p):=pr_1^{\tilde s(p)}\log(2p)+(1-p)r_2^{\tilde s(p)}\log(2(1-p)),
\end{equation}
where $\tilde{s}(p)$ is given by \eqref{eq:s-tilde-equation-bis}. 
Assume $r_1<r_2$. Then $\tilde{g}$ is strictly convex on $[1/2,1]$. Furthermore, $\tilde{g}(1/2)=0$, $\tilde{g}'(1/2)<0$ and $\tilde{g}(1)>0$. Hence, $\tilde{g}$ has a unique zero in the interval $(1/2,1)$.
\end{lemma}

\begin{proof}
By \eqref{eq:s-tilde-equation-bis} we can simplify $\tilde{g}(p)$ somewhat to
\begin{equation} \label{eq:g-tilde-simpplified}
\tilde{g}(p)=pr_1^{\tilde s(p)}\log p+(1-p)r_2^{\tilde s(p)}\log(1-p)+\frac12\log 2.
\end{equation}
Differentiating twice gives
\begin{align}
\begin{split}
\tilde{g}'(p)&=\left\{p\log p\cdot r_1^{\tilde s}\log r_1+(1-p)\log(1-p)\cdot r_2^{\tilde s}\log r_2\right\}\frac{d\tilde{s}}{dp}\\
&\qquad +r_1^{\tilde s}\big(\log p+1\big)-r_2^{\tilde s}\big(\log(1-p)+1\big)
\end{split}
\label{eq:g-tilde-prime}
\end{align}
(where we suppress the dependence of $\tilde{s}$ on $p$ to avoid clutter), and, after some simplification,
\begin{align}
\begin{split}
\tilde{g}''(p)&=\left\{p\log p\cdot r_1^{\tilde s}\log r_1+(1-p)\log(1-p)\cdot r_2^{\tilde s}\log r_2\right\}\frac{d^2\tilde{s}}{dp^2}\\
&\qquad +\left\{p\log p\cdot r_1^{\tilde s}(\log r_1)^2+(1-p)\log(1-p)\cdot r_2^{\tilde s}(\log r_2)^2\right\}\left(\frac{d\tilde{s}}{dp}\right)^2\\
&\qquad +2\left\{(\log p+1)\cdot r_1^{\tilde s}\log r_1-\big(\log(1-p)+1\big)\cdot r_2^{\tilde s}\log r_2\right\}\frac{d\tilde{s}}{dp}\\
&\qquad +\frac{r_1^{\tilde s}}{p}+\frac{r_2^{\tilde s}}{1-p}.
\end{split}
\label{eq:g-tilde-double-prime}
\end{align}
It is not immediately clear whether this is positive. But if we multiply \eqref{eq:second-derivative-of-s-tilde} by $\log p$ and subtract the result from the expression in \eqref{eq:g-tilde-double-prime}, we obtain
\begin{align*}
\tilde{g}''(p)&=(1-p)\{\log(1-p)-\log p\}r_2^{\tilde s}\log r_2 \cdot \frac{d^2\tilde{s}}{dp^2}\\
&\qquad +(1-p)\{\log(1-p)-\log p\}r_2^{\tilde s}(\log r_2)^2\cdot \left(\frac{d\tilde{s}}{dp}\right)^2\\
&\qquad +2\{r_1^{\tilde s}\log r_1-r_2^{\tilde s}\log r_2\}\cdot\frac{d\tilde{s}}{dp}\\
&\qquad -2\{\log(1-p)-\log p\}r_2^{\tilde s}\log r_2\cdot \frac{d\tilde{s}}{dp}\\
&\qquad +\frac{r_1^{\tilde s}}{p}+\frac{r_2^{\tilde s}}{1-p}\\
&=:A_1+A_2+A_3+A_4+A_5.
\end{align*}
Since $p\geq 1/2$ and $d\tilde{s}/dp\leq 0$, Lemmas \ref{lem:negative-difference} and \ref{lem:convex} imply that $A_1, A_3, A_4$ and $A_5$ are nonnegative; however, $A_2<0$. On the other hand, if we multiply \eqref{eq:second-derivative-of-s-tilde} by $\log(1-p)$ and subtract the result from the expression in \eqref{eq:g-tilde-double-prime}, we get
\begin{align*}
\tilde{g}''(p)&=p\{\log p-\log(1-p)\}r_1^{\tilde s}\log r_1\cdot \frac{d^2\tilde{s}}{dp^2}\\
&\qquad +p\{\log p-\log(1-p)\}r_1^{\tilde s}(\log r_1)^2\cdot \left(\frac{d\tilde{s}}{dp}\right)^2\\
&\qquad +2\{r_1^{\tilde s}\log r_1-r_2^{\tilde s}\log r_2\}\cdot\frac{d\tilde{s}}{dp}\\
&\qquad +2\{\log p-\log(1-p)\}r_1^{\tilde s}\log r_1\cdot \frac{d\tilde{s}}{dp}\\
&\qquad +\frac{r_1^{\tilde s}}{p}+\frac{r_2^{\tilde s}}{1-p}\\
&=:B_1+B_2+B_3+B_4+B_5.
\end{align*}
Here $B_2,\dots,B_5\geq 0$ but $B_1<0$. Note, however, that if
\begin{equation} \label{eq:lower-sufficient}
\frac{d^2\tilde{s}}{dp^2}\geq (-\log r_2)\left(\frac{d\tilde{s}}{dp}\right)^2,
\end{equation}
then $A_1+A_2\geq 0$; and if
\begin{equation} \label{eq:upper-sufficient}
\frac{d^2\tilde{s}}{dp^2}\leq (-\log r_1)\left(\frac{d\tilde{s}}{dp}\right)^2,
\end{equation}
then $B_1+B_2\geq 0$. Since $-\log r_1>-\log r_2$, at least one of \eqref{eq:lower-sufficient} and \eqref{eq:upper-sufficient} is the case. Hence, $\tilde{g}''(p)>0$ and so $\tilde{g}$ is strictly convex on $[1/2,1]$.

Furthermore, from \eqref{eq:g-tilde} it follows immediately that $\tilde{g}(1/2)=0$. Differentiating $\tilde{g}$ using \eqref{eq:g-tilde}, we get a form more suitable for substituting $p=1/2$, namely
\begin{align*}
\tilde{g}'(p)&=\left\{p\log(2p)\cdot r_1^{\tilde s(p)}\log r_1+(1-p)\log(2(1-p))\cdot r_2^{\tilde s(p)}\log r_2\right\}\frac{d\tilde{s}}{dp}\\
&\qquad +r_1^{\tilde s(p)}\big(\log(2p)+1\big)-r_2^{\tilde s(p)}\big(\log(2(1-p))+1\big),
\end{align*}
whence $\tilde{g}'(1/2)=r_1^{s_0}-r_2^{s_0}<0$, using also that $\tilde{s}(1/2)=s_0$.
Finally, from \eqref{eq:s-tilde-equation-bis} we have $r_1^{\tilde s(1)}=1/2$, which gives $\tilde{g}(1)=\frac12\log 2>0$. Hence, $\tilde{g}(p)$ has a unique additional zero $p^*$ in $(1/2,1)$, and $\tilde{g}(p)\leq 0$ if and only if $1/2\leq p\leq p^*$.
\end{proof}

\begin{proof}[Proof of Theorem \ref{cor:two-maps-nonhomogeneous}]
Assume $r_1<r_2$. Here inequality \eqref{eq:M-large-condition} becomes
\begin{equation*} 
g(p):=r_1^{s_0}\log(2p)+r_2^{s_0}\log(2(1-p))\geq 0.
\end{equation*}
Note that $g(p)\to -\infty$ as $p\searrow 0$, and $g(1/2)=0$. Furthermore,
\[
g'(p)=\frac{r_1^{s_0}}{p}-\frac{r_2^{s_0}}{1-p},
\]
so that $g'(1/2)<0$, and
\[
g''(p)=-\frac{r_1^{s_0}}{p^2}-\frac{r_2^{s_0}}{(1-p)^2}<0,
\]
which shows $g$ is strictly concave. Hence $g(p)$ has a unique zero $p_*$ in $(0,1/2)$, and $g(p)\geq 0$ if and only if $p_*\leq p\leq 1/2$.

Next, observe that \eqref{eq:M-small-condition} holds if and only if $\tilde{g}(p)\leq 0$, where $\tilde{g}$ is the function from Lemma \ref{lem:g-tilde}. By that lemma, $\tilde{g}$ has a unique zero $p^*$ in the interval $(1/2,1)$, and $\tilde{g}(p)\leq 0$ if and only if $1/2\leq p\leq p^*$.

In the third case (when $p\leq p_*$ or $p\geq p^*$), the formula for $\hat{s}$ follows by solving the linear system
\begin{align*}
h(x_1,x_2)&=-x_1\log r_1-x_2\log r_2=1,\\
\phi(x_1,x_2)&=x_1\log(2p)+x_2\log(2(1-p))=0,
\end{align*}
and substituting the result into $\psi$.
\end{proof}

\section*{Acknowledgments}
The first author is partially supported by Simons Foundation grant \# 709869.


\begin{thebibliography}{13}

\bibitem{Allaart}
{\sc P. C. Allaart}, Differentiability and H\"older spectra of a class of self-affine functions. {\em Adv. Math.} {\bf 328} (2018), 1--39.

\bibitem{Allaart-Jones}
{\sc P. C. Allaart} and {\sc T. Jones}, Random subsets of Cantor sets generated by trees of coin flips. {\em J. Fractal Geom.} {\bf 12} (2025), 67--92.

\bibitem{Barnsley}
{\sc M. F. Barnsley, J. E. Hutchinson} and {\sc \"O. Stenflo}, $V$-variable fractals: Fractals with partial self similarity. {\em Adv. Math.} {\bf 218} (2008), no. 6, 2051--2088.

\bibitem{BGS}
{\sc I. Benjamini, O. Gurel-Gurevich} and {\sc B. Solomyak}, Branching random walk with exponentially decreasing steps, and stochastically self-similar measures. {\em Trans. Amer. Math. Soc.} {\bf 361} (2009), no. 3, 1625--1643.

\bibitem{Falconer}
{\sc K. Falconer}, Random fractals. {\em Math. Proc. Cambridge Philos. Soc.} {\bf 100} (1986), no. 3, 559--582.


\bibitem{Graf}
{\sc S. Graf}, Statistically self-similar fractals. {\em Probab. Theory Related Fields} {\bf 74} (1987), 357--392.

\bibitem{Hutchinson}
{\sc J.~E. Hutchinson}, Fractals and self-similarity, {\em Indiana Univ. Math. J.} {\bf 30} (1981), 713--747.

\bibitem{Jarvenpaa}
{\sc E. J\"arvenp\"a\"a, M. J\"arvenp\"a\"a, A. K\"aenm\"aki, H. Koivusalo, \"O. Stenflo} and {\sc V. Suomala}, Dimensions of random affine code tree fractals. {\em Ergodic Theory Dynam. Systems} {\bf 34} (2014), 854--875.

\bibitem{Koivusalo}
{\sc H. Koivusalo}, Dimension of uniformly random self-similar fractals. {\em Real Anal. Exchange} {\bf 39} (2014), no. 1, 73--90.

\bibitem{Liu-Wu}
{\sc Y. Liu} and {\sc K. Wu}, A dimensional result for random self-similar sets, {\em Proc. Amer. Math. Soc.} {\bf 130} (2002), no. 7, 2125--2131.

\bibitem{Mauldin-Williams}
{\sc R.D. Mauldin} and {\sc S.C. Williams}, Random recursive constructions: Asymptotic geometric and topological properties, {\em Trans. Amer. Math. Soc.} {\bf 295} (1986), 325--346.

\bibitem{Moran}
{\sc P.A.P. Moran}, Additive functions of intervals and Hausdorff measure, {\em Proc. Cambridge Philos. Soc.} {\bf 42} (1946), 15--23.

\bibitem{Troscheit1}
{\sc S. Troscheit}, On the dimensions of attractors of random self-similar graph directed iterated function systems, {\em J. Fractal Geom.} {\bf 4} (2017), no.~3, 257--303; MR3732642

\end{thebibliography}
\end{document}